\newtheorem{corollary}{Corollary}
\newtheorem{lemma}{Lemma}
\newtheorem{theorem}{Theorem}
\newtheorem*{corollary*}{Corollary}
\def\e{\,{\rm{e}}}
\title{On the Vertical Distribution of Values of $L$-functions in the Selberg Class}
\author{Athanasios Sourmelidis $\cdot$ Teerapat Srichan $\cdot$ J\"orn Steuding}
\begin{document}

\date{}

\maketitle

\begin{abstract}
\noindent
We prove explicit formulae for $\alpha$-points of $L$-functions from the Selberg class. 
Next we extend a theorem of Littlewood on the vertical distribution of zeros of the Riemann zeta-function $\zeta(s)$ to the case of $\alpha$-points of the aforementioned $L$-functions. 
This result implies the uniform distribution of subsequences of $\alpha$-points and from this a discrete universality theorem in the spirit of Voronin is derived.
\end{abstract}

\section{Introduction and Statement of the Main Results}

The Riemann zeta-function $\zeta(s)$ plays a central role in number theory and, in particular, the distribution of its zeros is relevant for the error term in the prime number theorem and further asymptotic formulae in analytic number theory. In his invited talk at the fifth International Mathematical Congress at Cambridge in 1912, Landau \cite{astart} suggested that ``[g]iven an analytic function, the points for which this function is $0$ are very important; however, of equal interest are those points where the function assumes a given value''. 

Given a complex number $\alpha$, the roots of the equation 
$$
\zeta(s)=\alpha
$$
are called $\alpha$-points and will be denoted by $\rho_{\alpha}=\beta_{\alpha}+i\gamma_{\alpha}$. 
Moreover, for the non-trivial zeros of $\zeta(s)$ we just use the notation $\rho=\beta+i\gamma$.
Our investigations have their origin in the work of Landau \cite{lan} who proved that if $x>1$, then
$$\sum\limits_{0<\gamma\leq T}x^\rho=-\dfrac{T}{2\pi}\Lambda(x)+O_x(\log T)$$
for any $T>1$, where $\Lambda(x)$ is the von Mangoldt function extended to the whole real line by setting it to be equal to zero if $x$ is not a positive integral power of a prime $p$.
The dependence of the implicit constant on $x$ was made explicit only much later by Gonek \cite{Gon}.

Such explicit formulae have been proven very useful to the study of the vertical distribution of the non-trivial zeros of $\zeta(s)$.
For instance, Radamacher \cite{rad} employed Landau's formula and proved that the sequence $(a\gamma)_{\gamma>0}$ is uniformly distributed mod 1 for any real number $a\neq0$, provided that the Riemann Hypothesis is true.
Later Elliott \cite{Ell} remarked that the latter condition can be removed, and (independently) Hlawka \cite{Hl} obtained the aforementioned result unconditionally.
On the other hand,
Ford and Zaharescu \cite{F.Z} and Ford, Soundararajan and Zaharescu \cite{F.S.Z} made use of Gonek's explicit version of Landau's formula to prove quantitative results regarding the distribution of $\left(a\gamma\right)_{\gamma>0}$ with respect to its discrepancy.
For related results we also refer to Fujii \cite{F1,F2,F3,F4,F5,F6,F7}.

Motivated by such considerations, the third author \cite{st} studied the distribution of the $\alpha$-points of $\zeta(s)$.
He first proved the analogue of Landau's formula, that is, if $\alpha\neq1$ is complex number, $x\neq1$ is a positive number and $\epsilon>0$, then
\begin{align}\label{Reh}
\mathop{\sum\limits_{0<\gamma_\alpha\leq T}}_{\beta_\alpha>0}x^{\rho_\alpha}
=\dfrac{T}{2\pi}\left(a(x)-x\Lambda\left(\dfrac{1}{x}\right)\right)+O_{\alpha,x,\epsilon}\left(T^{1/2+\epsilon}\right)
\end{align}
for any $T>1$, where $a(x)$ is defined on the positive integers by the coefficients of the ordinary Dirichlet series
$$\dfrac{\zeta(s)}{\zeta(s)-\alpha}=\sum\limits_{n=1}^{\infty}\dfrac{a(n)}{n^s},$$
and is equal to zero for any other real number $x$.\footnote{The special case of $\alpha=1$ will be discussed in the remark following Theorem 3.}
He then employed a classic result due to Levinson \cite{levia}, regarding the clustering of the $\alpha$-points of $\zeta(s)$ around the critical line, and proved that the sequence $(a\gamma_\alpha)_{\gamma_\alpha>0}$ is uniformly distributed mod 1 for any real number $a\neq0$.
Recently,
Rehberg \cite{Reh} and Gonek and Baluyot \cite{bg} proved (independently) relation \eqref{Reh} and made the dependence on $x$ explicit; in accordance they proved discrepancy estimates for the sequence $(a\gamma_\alpha)_{\gamma_\alpha>0}$.

We improve upon these results in Theorem \ref{MAIN3}.
As a matter of fact, we start by proving analogous results for functions $\mathcal{L}(s)$ in the Selberg class $\mathcal{S}$ of a given degree $d_\mathcal{L}$.
The definition and properties of $\alpha$-points of such functions are similar to the one of $\zeta(s)$, which belongs to $\mathcal{S}$ with $d_\zeta=1$.
For more details we refer to Section 2.

To state our theorems, we only need to define here for a given complex number $\alpha\neq1$ and $\mathcal{L}\in\mathcal{S}$, the function $\Lambda(x;\mathcal{L},\alpha)$ which for positive integers $x=n$ is given by the coefficients of the  Dirichlet series 
\begin{align}\label{coeff}
\dfrac{\mathcal{L}'(s)}{\mathcal{L}(s)-\alpha}=-\sum\limits_{n=1}^{\infty}\dfrac{\Lambda(n;\mathcal{L},\alpha)}{n^s},
\end{align}
and is equal to zero for any other real number $x$. 
The error terms in the subsequent theorems depends on the real quantity $\sigma(\mathcal{L},\alpha)$, which is related to the convergence of the latter Dirichlet series and is explicitly given by (\ref{aabscissa}).
Laslty, the {\it Landau symbols} $o(\cdot)$ and $O(\cdot)$  have their usual meaning, and if the implied constant depends on some parameter $\epsilon$ (say), then we write $O_\epsilon$.
The implicit constants may depend also on $\mathcal{L}$ and $\alpha$ but we drop such subscripts.
The same comments apply to the {\it Vinogradov symbols} $\ll$ and $\gg$.

\begin{theorem}\label{MAIN1}
Let $\mathcal{L}\in \mathcal{S}\setminus\lbrace1\rbrace$, $\alpha\neq1$ be a complex number and $\epsilon>0$. Then
\begin{align*}
\mathop{\sum\limits_{0<\gamma_\alpha\leq T}}_{\beta_\alpha\geq0}x^{\rho_\alpha}
=&-\dfrac{T}{2\pi}\Lambda\left(x;\mathcal{L},\alpha\right)+O_\epsilon\left(x^{\sigma(\mathcal{L},\alpha)+\epsilon}\left(1+\mathbbm{1}_{\mathbb{R}\setminus\mathbb{Z}}(x)\min\left\{\frac{T}{x},\dfrac{1}{\|x\|}\right\}\right)\right)+\\
&+O_\epsilon\left(x^{\sigma\left(\mathcal{L},\alpha\right)+\epsilon}\log T\left(1+\dfrac{1}{\log x}\right)\right)+\\
&+O\left(\frac{1}{x^2}\left(\log T\left(1+\dfrac{1}{\log x}\right)+\min\left\{T,\dfrac{1}{\log x}\right\}\right)\right)
\end{align*}
for any $x,T>1$, where $\mathbbm{1}_\mathcal{A}$ is the characteristic function of a set $\mathcal{A}$ and $\|x\|$ denotes the distance from $x$ to the nearest integer.
\end{theorem}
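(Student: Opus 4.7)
The strategy is the classical residue-theorem approach. Apply Cauchy's theorem to
$$F(s) = x^{s}\,\frac{\mathcal{L}'(s)}{\mathcal{L}(s)-\alpha}$$
on a positively oriented rectangle $\mathcal{C}$ with vertices $c+i$, $c+iT$, $-2+iT$, $-2+i$, where $c = \sigma(\mathcal{L},\alpha)+\epsilon$. Inside $\mathcal{C}$ the poles of $F$ are, counted with multiplicity, exactly the $\alpha$-points with $0<\gamma_\alpha\le T$ and $\beta_\alpha\ge 0$, each contributing a residue $x^{\rho_\alpha}$, so the contour integral equals the sum on the left-hand side plus an $O(1)$ absorbing finitely many $\alpha$-points with $\gamma_\alpha\in(0,1]$. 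A vertical shift of $T$ by $O(1/\log T)$, combined with the Riemann--von Mangoldt-type counting formula for $\alpha$-points in the Selberg class from Section~2, ensures the top edge of $\mathcal{C}$ avoids all $\alpha$-points while only perturbing the sum by $O(\log T)$.

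The right vertical side produces the main term. Substituting (\ref{coeff}) and integrating term by term, the $n=x$ contribution equals $-\frac{T}{2\pi}\Lambda(x;\mathcal{L},\alpha)$ (nonzero only for $x\in\mathbb{Z}_{>0}$), while for $n\ne x$ each integral is $((x/n)^{c+iT}-(x/n)^{c+i})/\log(x/n)$. A Gonek-type dissection singles out the integer(s) nearest to $x$ (producing the $x^{c+\epsilon}\min\{T/x,\,1/\|x\|\}$ term, only present when $x\notin\mathbb{Z}$) and bounds the remaining series by $O_\epsilon(x^{c+\epsilon}\log T(1+1/\log x))$, which accounts for the first error term and the leading portion of the second.

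The horizontal integrals at $t=1$ and $t=T$ form the technical heart of the argument. I would split the range $[-2,c]$ at some $c_0$ just inside the region of absolute convergence: on $[c_0,c]$ the Dirichlet series controls $F$ termwise, so integrating $x^\sigma$ yields contributions of the advertised shape; on $[-2,c_0]$ one needs the uniform Titchmarsh-style estimate $\mathcal{L}'(s)/(\mathcal{L}(s)-\alpha)\ll(\log T)^A$ on a suitable horizontal line. This is the chief obstacle: one obtains it by combining a Hadamard partial-fractions decomposition of $\mathcal{L}(s)-\alpha$ with control of the local density of $\alpha$-points in the window $[T-1,T+1]$ furnished by Section~2, after choosing $T$ at distance $\gg 1/\log T$ from the nearest $\alpha$-point. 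Integrating $x^\sigma$ against this bound yields the remaining $\log T$-contributions and, in particular, the $x^{-2}\log T(1+1/\log x)$ portion of the third error term.

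On the left vertical side $\sigma=-2$, the logarithmic derivative of the functional equation for $\mathcal{L}\in\mathcal{S}$ writes $\mathcal{L}'/(\mathcal{L}-\alpha)$ as the gamma-factor term $\Phi'(s)/\Phi(s)$ (of size $O(\log t)$ by Stirling) plus a contribution governed by $\mathcal{L}'/\mathcal{L}$ at $\Re s = 3$, which is $O(1)$ from the Dirichlet series; the prefactor $\mathcal{L}/(\mathcal{L}-\alpha)$ is $1+O(|t|^{-A})$ there since $|\mathcal{L}(-2+it)|$ grows polynomially in $|t|$. An integration by parts in $t$ then exploits the oscillation of $x^{it}$ to gain the factor $1/\log x$, producing the $x^{-2}\min\{T,1/\log x\}$ contribution and completing the third $O$-term. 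Summing the four edge contributions yields the claimed formula.
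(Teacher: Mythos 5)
Your overall architecture matches the paper's: a rectangle with right edge at $\sigma=\sigma(\mathcal{L},\alpha)+\epsilon$, left edge at $\sigma=-2$, with $\mathcal{I}_1$ giving the main term and the Gonek-type dissection, and $\mathcal{I}_3$ treated via the functional equation, Stirling, and oscillation of $x^{it}$. However, there is a genuine gap in your treatment of the top horizontal edge, and it loses a factor of $\log T$ against the theorem's error term.

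You propose to pick $T$ at distance $\gg 1/\log T$ from the ordinates of the $\alpha$-points and then to invoke a Hadamard/partial-fractions bound of the form $\mathcal{L}'(s)/(\mathcal{L}(s)-\alpha)\ll(\log T)^{A}$ on the line $t=T$ for $-2\le\sigma\le\sigma(\mathcal{L},\alpha)+\epsilon$. With the separation you impose and the density $O(\log T)$ of $\alpha$-points per unit height, the best you can get this way is $A=2$: about $\log T$ nearby poles, each contributing $\ll 1/|T-\gamma_\alpha|\ll\log T$. Integrating $x^\sigma(\log T)^2$ over $[-2,\sigma(\mathcal{L},\alpha)+\epsilon]$ then produces $x^{\sigma(\mathcal{L},\alpha)+\epsilon}(\log T)^2/\log x$, which for $\log x\ll\log T$ is a factor of $\log T$ worse than the theorem's $x^{\sigma(\mathcal{L},\alpha)+\epsilon}\log T(1+1/\log x)$. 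The paper avoids this by \emph{not} bounding $1/(s-\rho_\alpha)$ on the line $t=T$ by the reciprocal distance. Instead, from Lemma~\ref{hori} it writes
$$2\pi i\,\mathcal{I}_2-\sum_{|\gamma_\alpha-T|\le 1}\ \int_{\sigma(\mathcal{L},\alpha)+\epsilon+iT}^{-2+iT}\frac{x^s}{s-\rho_\alpha}\,\mathrm{d}s\ \ll\ x^{\sigma(\mathcal{L},\alpha)+\epsilon}\frac{\log T}{\log x},$$
and then evaluates each of the remaining integrals by pushing the path up to $t=T+2$ and around a small rectangle: the residue is $O(1)$, the right vertical segment at $\sigma=\sigma(\mathcal{L},\alpha)+\epsilon$ stays at distance $\ge\epsilon$ from \emph{every} $\alpha$-point (since $\beta_\alpha\le\sigma(\mathcal{L},\alpha)$), and the other two segments are trivially bounded; the upshot is that each pole contributes $O_\epsilon(x^{\sigma(\mathcal{L},\alpha)+\epsilon})$ uniformly, hence $\mathcal{I}_2\ll_\epsilon x^{\sigma(\mathcal{L},\alpha)+\epsilon}\log T(1+1/\log x)$ in total, with no separation hypothesis on $T$ needed at all. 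This is the step your outline is missing.

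A smaller bookkeeping point: shifting $T$ by $O(1/\log T)$ moves $O(1)$ ordinates across the top edge, and each term $x^{\rho_\alpha}$ can be as large as $x^{\sigma(\mathcal{L},\alpha)}$ in modulus, so the perturbation of the sum is $O\!\left(x^{\sigma(\mathcal{L},\alpha)}\right)$, not $O(\log T)$; this is still absorbable, but not for the reason you give. (The paper instead replaces $T$ by a $T_2\in[T,T+1]$ and pays $O(x^{\sigma(\mathcal{L},\alpha)}\log T)$.)
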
 

\begin{theorem}\label{MAIN2}
Let $\mathcal{L}\in \mathcal{S}$ be such that $d_{\mathcal{L}}\geq2$, $\alpha\neq1$ be a complex number and $\epsilon\in(0,1/2)$.  Then
\begin{align*}
\mathop{\sum\limits_{0<\gamma_\alpha\leq T}}_{\beta_\alpha\geq0}x^{-\rho_\alpha}
=&\,-\dfrac{T}{2\pi x}\overline{\Lambda\left(x;\mathcal{L},0\right)}+O_\epsilon\left(x^\epsilon\left(1+\mathbbm{1}_{\mathbb{R}\setminus\mathbb{Z}}(x)\min\left\{\dfrac{T}{x},\dfrac{1}{\| x\|}\right\}\right)\right)+\\
&+O\left(x^{\epsilon}+\dfrac{1}{x^{\sigma\left(\mathcal{L},\alpha\right)+1}\log x}+x^{\epsilon}\log T\left(\dfrac{1}{\log x}+\log\dfrac{1}{\epsilon}\right)\right)
\end{align*}
for any $x,T>1$.
\end{theorem}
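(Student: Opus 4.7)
The approach parallels the proof of Theorem \ref{MAIN1}, but since $x^{-s}$ grows to the left when $x>1$, the main term now emerges from the functional-equation side of the contour rather than from the Dirichlet-series side. The plan is to apply the residue theorem to
\begin{equation*}
\frac{1}{2\pi i}\oint_{\mathcal{R}}\frac{\mathcal{L}'(s)}{\mathcal{L}(s)-\alpha}\,x^{-s}\,ds,
\end{equation*}
where $\mathcal{R}$ is a positively oriented rectangle with right side $\Re s = a$ slightly above $\max\{1,\sigma(\mathcal{L},\alpha)\}$, left side $\Re s = -\epsilon$ (chosen far enough to the left to enclose every $\alpha$-point with $\beta_\alpha\geq 0$, yet free of trivial $\alpha$-points, which sit further left near the poles of the $\Gamma$-factors), and horizontal sides at heights perturbed by $O(1/\log T)$ from $\eta$ and $T$ in order to avoid the ordinates $\gamma_{\alpha}$. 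The residue side then produces exactly the sum of interest.

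The right vertical and horizontal sides are estimated in the same spirit as for Theorem \ref{MAIN1}. Along $\Re s = a$ the Dirichlet series \eqref{coeff} converges absolutely, and termwise integration combined with the uniform bound $1/\log(nx)\ll 1/\log x$ (available because $x>1$) produces a contribution absorbed by $O(x^{-\sigma(\mathcal{L},\alpha)-1-\epsilon}/\log x)$. The two horizontals are controlled by standard convexity estimates for $\mathcal{L}'/\mathcal{L}$ along lines avoiding $\alpha$-points, integrated against $x^{-\sigma}$, yielding the $O(x^{\epsilon}\log T\,(\log(1/\epsilon)+1/\log x))$ contribution.

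The crucial step is the left vertical side at $\Re s = -\epsilon$. Using the functional equation $\mathcal{L}(s)=\omega\,\Delta(s)\,\widetilde{\mathcal{L}}(1-s)$, where $\widetilde{\mathcal{L}}(s)=\overline{\mathcal{L}(\bar s)}$ and $\Delta(s)$ gathers the archimedean factors, I would write
\begin{equation*}
\mathcal{L}(s)-\alpha=\omega\,\Delta(s)\,\widetilde{\mathcal{L}}(1-s)\left(1-\frac{\alpha}{\omega\,\Delta(s)\,\widetilde{\mathcal{L}}(1-s)}\right).
\end{equation*}
The hypothesis $d_{\mathcal{L}}\geq 2$ forces $|\Delta(-\epsilon+it)|\gg|t|^{d_{\mathcal{L}}(\epsilon+1/2)}$, which grows faster than linearly in $|t|$ and so legitimises a convergent logarithmic expansion of the bracketed factor with an integrable remainder; it is precisely here that the argument would collapse for $d_{\mathcal{L}}=1$. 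Taking logarithmic derivatives yields
\begin{equation*}
\frac{\mathcal{L}'(s)}{\mathcal{L}(s)-\alpha}=\frac{\Delta'(s)}{\Delta(s)}-\frac{\widetilde{\mathcal{L}}'(1-s)}{\widetilde{\mathcal{L}}(1-s)}+O\left(\frac{1}{|\Delta(s)\widetilde{\mathcal{L}}(1-s)|}\right),
\end{equation*}
and substituting the absolutely convergent Dirichlet series $-\widetilde{\mathcal{L}}'(1-s)/\widetilde{\mathcal{L}}(1-s)=\sum_{n\geq 1}\overline{\Lambda(n;\mathcal{L},0)}\,n^{s-1}$ reduces the left-side integral to evaluations of $n^{-1}\int(n/x)^{it}\,dt$. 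The resonant case $n=x$ (which forces $x\in\mathbb{N}$) contributes exactly $-T\,\overline{\Lambda(x;\mathcal{L},0)}/(2\pi x)$, the announced main term, while the off-diagonal terms are handled by $|\int(n/x)^{it}\,dt|\ll 1/|\log(n/x)|$, producing the $O_{\epsilon}(x^{\epsilon}(1+\mathbbm{1}_{\mathbb{R}\setminus\mathbb{Z}}(x)\min\{T/x,1/\|x\|\}))$ remainder.

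I expect the main technical obstacle to lie in the left side: keeping the functional-equation expansion uniform in $t$ down to small ordinates, controlling the smooth-but-non-small $\Delta'/\Delta$ contribution (which is responsible for the $x^{\epsilon}$-type errors), and handling the delicate interaction with ordinates of $\alpha$-points close to the real axis. The pointwise error $O(1/|\Delta(s)\widetilde{\mathcal{L}}(1-s)|)$ must be integrated against $x^{-s}$ along $\Re s = -\epsilon$, and verifying that this is genuinely absorbed into the stated error (rather than producing an uncontrolled dependence on $\alpha$ or on $\epsilon$) is the delicate point that dictates the shape of the final estimate.
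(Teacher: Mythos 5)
Your proposal follows essentially the same route as the paper's proof: same rectangular contour with right side in the half-plane of absolute convergence and left side at $\Re s=-\epsilon$, same treatment of the right vertical and horizontal integrals, same use of the functional equation on the left vertical to extract the main term from the Dirichlet series of $-\overline{\mathcal{L}'/\mathcal{L}}(1+\epsilon+it)$, and the same crucial use of $d_\mathcal{L}\geq 2$ (via the growth $|\mathcal{L}(-\epsilon+it)|\gg |t|^{(1/2+\epsilon)d_\mathcal{L}}$) to make the expansion of $(1-\alpha/\mathcal{L})^{-1}$ integrable. The only cosmetic difference is that you phrase the expansion as a logarithmic derivative of the factored form rather than as an explicit geometric series $1+\sum_{k\geq 1}(\alpha/\mathcal{L})^k$, and your stated remainder $O(1/|\Delta\widetilde{\mathcal{L}}|)$ omits a harmless $\log t$ factor from $\mathcal{L}'/\mathcal{L}$; neither changes the argument.
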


\begin{theorem}\label{MAIN3}
Let  $\mathcal{L}\in S$ be such that $d_\mathcal{L}=1$ and $\alpha\neq1$ be a complex number. 
Let also $m=1$ if $\mathcal{L}\equiv\zeta$ and $m=7$, otherwise.
Then
\begin{align*}
\mathop{\sum\limits_{0<\gamma_\alpha\leq T}}_{\beta_\alpha\geq0}x^{-\rho_\alpha}
=&-\dfrac{T}{2\pi x}\overline{\Lambda\left(x;\mathcal{L},0\right)}+O\left(\log x\min\left\{\dfrac{T}{x},\dfrac{1}{\langle x\rangle}\right\}\right)+O\left(\left(\log T\right)^{2m}\log(3x)\right)+\\
&+O\left((1+\log x)\log(2x)\log\log(3x)+\dfrac{1}{x^{\sigma\left(\mathcal{L},\alpha\right)+1}\log x}\right)+\\
&+O\left(\log T\left(\dfrac{1}{\log x}+\log x+\log\log(3x)\right)\right)
\end{align*}
for any $x,T>1$.
\end{theorem}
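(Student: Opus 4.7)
The plan is to mirror the proof strategy of Theorem~\ref{MAIN2}: apply the residue theorem to $\mathcal{L}'(s)/(\mathcal{L}(s)-\alpha)\cdot x^{-s}$ on a rectangular contour that encloses exactly the $\alpha$-points with $\beta_\alpha\ge0$ and $0<\gamma_\alpha\le T$, and then estimate the four sides. The structural difference is that for $d_\mathcal{L}=1$ the functional-equation factor $\Delta_\mathcal{L}(s)$ grows only polynomially in $|t|$, so the automatic gamma-factor decay exploited when $d_\mathcal{L}\ge 2$ is no longer available; both the main term and the more delicate error terms must instead be extracted by explicit asymptotic analysis along the left vertical line.

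I would fix $c>\max\{1,\sigma(\mathcal{L},\alpha)\}$ so that $-\mathcal{L}'(s)/(\mathcal{L}(s)-\alpha)=\sum_n\Lambda(n;\mathcal{L},\alpha)n^{-s}$ converges absolutely on $\mathrm{Re}(s)=c$, pick small parameters $\delta,\delta_0>0$, and replace $T$ by a nearby $T'=T+O(1)$ such that the horizontal line $\mathrm{Im}(s)=T'$ lies at distance $\gg 1/\log T$ from every $\alpha$-point; this is possible by the standard bound $\ll\log T$ on the number of $\alpha$-points in any unit horizontal strip. On the rectangle with corners $-\delta+i\delta_0$, $c+i\delta_0$, $c+iT'$, $-\delta+iT'$ the residue theorem then equates $2\pi i\sum x^{-\rho_\alpha}$ to a contour integral. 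The right vertical side at $\sigma=c$ is handled by term-by-term integration of the absolutely convergent Dirichlet series; summation by parts collapses the off-diagonal terms to $\ll\log x\cdot\min\{T/x,1/\langle x\rangle\}$, while the tail produces the $1/(x^{\sigma(\mathcal{L},\alpha)+1}\log x)$ remainder.

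The decisive step is the left vertical side at $\sigma=-\delta$. Using the functional equation $\mathcal{L}(s)=\Delta_\mathcal{L}(s)\overline{\mathcal{L}(1-\overline{s})}$ together with the splitting
$$\frac{\mathcal{L}'(s)}{\mathcal{L}(s)-\alpha}=\frac{\mathcal{L}'(s)}{\mathcal{L}(s)}+\frac{\alpha\,\mathcal{L}'(s)}{\mathcal{L}(s)(\mathcal{L}(s)-\alpha)},$$
one rewrites the first summand, via logarithmic differentiation, as $\Delta_\mathcal{L}'(s)/\Delta_\mathcal{L}(s)-\overline{\mathcal{L}'(1-\overline{s})/\mathcal{L}(1-\overline{s})}$. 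Since $\mathrm{Re}(1-\overline{s})=1+\delta$, the reflected logarithmic derivative expands as the absolutely convergent series $-\sum_n\overline{\Lambda(n;\mathcal{L},0)}\,n^{s-1}$; integrating this against $x^{-s}$ from height $\delta_0$ to $T'$ isolates the diagonal term $n=x$ (which is nonzero only when $x$ is a prime power of $\mathcal{L}$) and, after accounting for the clockwise orientation of the left side, produces the main term $-T/(2\pi x)\,\overline{\Lambda(x;\mathcal{L},0)}$. The off-diagonal terms again collapse under summation by parts, and the Stirling expansion of $\Delta_\mathcal{L}'/\Delta_\mathcal{L}$---only logarithmic in $t$ when $d_\mathcal{L}=1$---produces the asymmetric contributions $(1+\log x)\log(2x)\log\log(3x)$ together with the $\log T\,(\log x+\log\log(3x))$ portion of the final bound. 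The perturbation $\alpha\mathcal{L}'/(\mathcal{L}(\mathcal{L}-\alpha))$ is absolutely integrable on the left line because $|\mathcal{L}(s)|\gg|t|^{1/2+\delta}$ there by the functional equation, so it contributes only lower-order terms.

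The top and bottom horizontal sides at heights $T'$ and $\delta_0$ are bounded pointwise via $\mathcal{L}'(s)/(\mathcal{L}(s)-\alpha)\ll(\log|t|)^{2m}$, valid because these heights were chosen far from $\alpha$-points: for $\mathcal{L}\equiv\zeta$ this is the classical Titchmarsh bound (whence $m=1$), whereas for the remaining degree-$1$ elements of $\mathcal{S}$ one uses the analogous bound obtained through moment estimates on $1/(\mathcal{L}(s)-\alpha)$ (whence $m=7$). Integration over $-\delta\le\sigma\le c$ then yields the $(\log T)^{2m}\log(3x)$ contribution. The main obstacle throughout is precisely this degree-$1$ bookkeeping: the various logarithmic-in-$x$ factors arising from the polynomial (rather than exponential) Stirling behavior of $\Delta_\mathcal{L}$ must be tracked explicitly, since they are not absorbed by gamma-factor decay as they were in the proof of Theorem~\ref{MAIN2}.
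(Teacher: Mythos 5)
Your overall contour setup, the treatment of the right vertical side, and the idea of using the degree-one lower bound $|\mathcal{L}(\sigma+it)|\gg|t|^{1/2-\sigma}/(\log|t|)^m$ for $-1\le\sigma\le 0$ (sourced from zero-free-region estimates for $\zeta$ and Dirichlet $L$-functions) all match the paper's strategy. The serious gap is in your handling of the left vertical line, where you split
\[
\frac{\mathcal{L}'(s)}{\mathcal{L}(s)-\alpha}=\frac{\mathcal{L}'(s)}{\mathcal{L}(s)}+\frac{\alpha\,\mathcal{L}'(s)}{\mathcal{L}(s)(\mathcal{L}(s)-\alpha)}
\]
and dismiss the second summand as ``absolutely integrable on the left line ... so it contributes only lower-order terms.'' That claim is false for $d_\mathcal{L}=1$. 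On $\sigma=-\delta$ you have $|\mathcal{L}'(s)/\mathcal{L}(s)|\asymp\log|t|$ and $|\mathcal{L}(s)|\gg_\delta|t|^{1/2+\delta}$, so the perturbation is only $\ll_\delta (\log t)/t^{1/2+\delta}$; integrating against $|x^{-s}|=x^{\delta}$ from $T_1$ to $T$ gives $\ll_\delta x^{\delta}\,T^{1/2-\delta}\log T$, which is polynomial in $T$ and vastly exceeds every error term in the statement. (Pushing $\delta$ toward $1/2$ to damp the $T$-power costs you $x^{\delta}\to x^{1/2}$ and still leaves $T^{o(1)}$-type growth; pushing $\delta\to 0$, as the paper does with $\epsilon=1/\log(3x)$, makes the $T$-integral diverge and also invalidates the $|\mathcal{L}(1+\delta+it)|\gg_\delta 1$ input you implicitly need.)

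The paper's proof of this theorem is almost entirely devoted to this term (there called $\mathcal{I}_{32}$, arising from the $k=1$ piece of the geometric expansion of $(1-\alpha/\mathcal{L})^{-1}$). After choosing $\epsilon=1/\log(3x)$ so that $x^\epsilon$ stays bounded, one integrates $\int x^{-s}\mathcal{L}'(s)/\mathcal{L}(s)^2\,ds$ by parts to reach $\log x\cdot\int x^{-s}/\mathcal{L}(s)\,ds$, applies the functional equation to write $1/\mathcal{L}(-\epsilon+it)$ in terms of $H_\mathcal{L}(-\epsilon+it)^{-1}$ and the M\"obius--Dirichlet expansion $1/\overline{\mathcal{L}(1+\epsilon+it)}=\sum_n\mu(n)\overline{f(n)}n^{-1-\epsilon-it}$, inserts Stirling's asymptotic for $H_\mathcal{L}^{-1}$, splits the $t$-range dyadically, and then invokes the Levinson--Gonek exponential-integral lemma (Lemma~\ref{Gonek} in the paper) to evaluate the resulting oscillatory integrals. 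It is precisely this stationary-phase analysis, plus a careful summation over the dyadic scales, that turns the naive polynomial-in-$T$ bound into the stated polylogarithmic one, and that produces the $(1+\log x)\log(2x)\log\log(3x)$ and $\log T(\log x+\log\log(3x))$ contributions. Your sketch contains none of this machinery, so as written it cannot reach the theorem's error terms.

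Two smaller points: (i) the $k\ge 2$ tail of the geometric series needs the log-corrected lower bound, not the clean power bound you quote, and it is that tail (estimated trivially) which supplies the $(\log T)^{2m}\log(3x)$ term; (ii) for the off-diagonal terms in the main-term computation the paper cites Gonek's quantitative Landau lemma (\cite[Lemma~2]{Gon}), which gives $\log x\min\{T/x,1/\langle x\rangle\}+\log(2x)\log\log(3x)$ directly — ``summation by parts'' alone will not produce the $\log\log(3x)$ refinement that appears in the stated bound.
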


We do not treat the case of $\alpha=1$ since it seems to be rather complicated.
The major difficulty arises from the fact that the function on the right-hand side of \eqref{coeff} can not be represented as an ordinary Dirichlet series if $\alpha=1$ (this will become apparent in Lemma 1 of Section 3).
There have been attempts in \cite{JMS}, \cite{Reh},  and \cite{st} but in each case the aforementioned lack of an ordinary Dirichlet series represantation has been overseen. 
It was first Baluyot and Gonek \cite{bg} who noticed these inaccuracies and gave a partial answer also for the case of $\alpha=1$.

Explicit formulae in the case of $L$-functions in the Selberg class have been proven also by Murty and Perelli \cite{MP} with respect to their non-trivial zeros, as well as by Jakhlouti, Mazhouda and Steuding \cite{JMS} with respect to $\alpha$-points.
In particular those formulae in \cite{JMS} hold under some hypotheses which are not yet proven to be true.
Moreover, their formulae are not uniform in $x$ and there is a $T^{1/2+\epsilon}$ in the error term.
Our results are unconditional, uniform in $x$ and have smaller error terms.

As we mentioned earlier, such formulae are useful in the study of the vertical distribution of $\alpha$-points of a given function.
However, our poor knowledge of the horizontal distribution of $\alpha$-points of functions $\mathcal{L}$ in the Selberg class which have degree $d_\mathcal{L}\geq2$, prevents us from deriving results similar to the ones that have been proved for $\zeta(s)$.
We would have to assume certain hypotheses such as the Lindel\"of hypothesis for $L$-functions in the Selberg class.
For more details we refer to \cite{JMS} and its bibliography.
We are optimistic that our explicit formulae and these additional hypotheses can produce discrepancy estimates for sequences $(a\gamma_\alpha)_{\gamma_\alpha>0}$, but we will not persue this direction further.
Instead we prove an unconditional result regarding the vertical-distribution of such $\alpha$-points.

\begin{theorem}\label{Main1}
Let $\mathcal{L}\in\mathcal{S}\setminus\lbrace1\rbrace$ and $\alpha$ be a complex number.
Then there exists a positive constant $A=A(\alpha)$ such that for every $T\geq \exp(3)$ one can find an $\alpha$-point $\beta_\alpha+i\gamma_\alpha$ of $\mathcal{L}(s)$ satisfying
\begin{align*}
|\gamma_\alpha-T|<\dfrac{A}{\log\log\log T}.
\end{align*}
\end{theorem}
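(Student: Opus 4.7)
The plan is to adapt Littlewood's classical argument for zeros of $\zeta(s)$ to the $\alpha$-points of $\mathcal{L}\in\mathcal{S}$. Set $N_\alpha(T):=\#\{\rho_\alpha : 0<\gamma_\alpha\le T,\ \beta_\alpha\ge 0\}$; the theorem follows once we can guarantee $N_\alpha(T+h) - N_\alpha(T-h) \geq 1$ for $h = A/\log\log\log T$ and some $A = A(\alpha) > 0$. To this end I would first derive a Riemann--von Mangoldt-type formula: applying the argument principle to $\mathcal{L}(s)-\alpha$ on a suitable rectangle and using the functional equation together with Stirling's formula for the $\Gamma$-factors of $\mathcal{L}$, one obtains
\[
N_\alpha(T) = \frac{d_\mathcal{L}}{2\pi}T\log T + c_\mathcal{L}\,T + S_\alpha(T) + O(1),
\]
where $c_\mathcal{L}$ is explicit and $S_\alpha(T) := \pi^{-1}\arg(\mathcal{L}(1/2+iT)-\alpha)$ is taken by continuous variation from the right of the critical strip, as in \cite{JMS}. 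Differencing at $T\pm h$ gives
\[
N_\alpha(T+h) - N_\alpha(T-h) = \frac{d_\mathcal{L} h}{\pi}\log T + O(h) + \big(S_\alpha(T+h) - S_\alpha(T-h)\big),
\]
so the theorem reduces to a Littlewood-type fluctuation bound of the shape $|S_\alpha(T+h) - S_\alpha(T-h)| = O(\log T/\log\log\log T)$.

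For this bound I would apply Jensen's formula in the form of Littlewood's lemma to $\log(\mathcal{L}(s)-\alpha)$ on a short rectangle around $1/2+iT$. On the right-hand vertical side $\sigma=\sigma_0$, taking $\sigma_0$ large enough that $|\mathcal{L}(\sigma_0+it)-\alpha|$ is bounded away from $0$ uniformly in $t$, this logarithm is $O(1)$. On the left-hand side the polynomial growth of $\mathcal{L}$, which follows from the axioms of $\mathcal{S}$ and the functional equation, yields $|\log(\mathcal{L}(s)-\alpha)| = O(\log T)$, and Phragm\'en--Lindel\"of interpolates between the two. A standard mean-to-pointwise argument then converts the resulting integrated estimate into the required bound on the variation of $S_\alpha$. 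Choosing $A = A(\alpha)$ sufficiently large forces the main term in the displayed counting formula to dominate, yielding the desired $\alpha$-point in $(T-h,T+h]$.

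The hardest part is reaching the $1/\log\log\log T$ threshold in the fluctuation bound. It requires uniform control of $\log(\mathcal{L}(s)-\alpha)$ in the critical strip with explicit dependence on $\alpha$, and some care is needed at ordinates where $\mathcal{L}(s)$ comes close to $\alpha$. The Euler product axiom of the Selberg class (forbidding $\mathcal{L}(s)$ from vanishing on $\sigma=1$) together with the polynomial growth axiom provide precisely the ingredients required to transplant the classical $\zeta$-function argument to this more general setting.
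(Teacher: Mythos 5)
Your reduction of the theorem to a fluctuation bound for $S_\alpha$ is valid in principle, but the proposed method for proving that bound cannot reach the required strength. The paper proves the theorem by following Littlewood's argument as presented in Titchmarsh, Theorem~9.12: after normalising $\mathcal{L}(s)-\alpha$ to a function $g(s)$ with $\Re g(s)\asymp 1$ far to the right (a point your proposal omits but which is essential for the Borel--Carath\'eodory step), one sets $h=\log g$, assumes no $\alpha$-point with $|\gamma_\alpha-T|\le\delta$, and works along a chain of about $1/\delta$ overlapping circles $\mathcal{K}_{m,n}$ of radius $\asymp\delta$ moving left across the strip. At each step Hadamard's three-circle theorem combined with a Borel--Carath\'eodory estimate yields a bound of the iterated form $\mathbf{M}_{n+1}\ll\mathbf{M}_n^{c}$ with a fixed $c>1$, so that $\log\log\mathbf{M}_n$ grows \emph{linearly} in $n$. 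Comparing this, after $N\asymp1/\delta$ steps, with the lower bound $\log|\mathcal{L}(-1+iT)|\gg\log T$ coming from the functional equation gives $N\gg\log\log\log T$, hence $\delta\ll1/\log\log\log T$. The triple logarithm is thus produced by a geometric (multiplicative) amplification across the chain of circles.

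Your proposed route instead applies Littlewood's lemma (Jensen's formula) on a short rectangle and interpolates between the $O(1)$ bound on the right side and the $O(\log T)$ bound on the left via Phragm\'en--Lindel\"of. This is a \emph{linear} interpolation scheme: it can at best reproduce the trivial bound $|S_\alpha(T+h)-S_\alpha(T-h)|=O(\log T)$, which only forces $h\gg1$. Nothing in the Jensen/Phragm\'en--Lindel\"of framework provides the multiplicative amplification that converts the gap-free hypothesis into a bound growing doubly exponentially in $1/\delta$; and the claimed fluctuation estimate $|S_\alpha(T+h)-S_\alpha(T-h)|=O(\log T/\log\log\log T)$ is not known unconditionally even for $\zeta$ (indeed, the stronger pointwise $S(T)=O(\log T/\log\log T)$ requires the Riemann Hypothesis). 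So the step ``a standard mean-to-pointwise argument then converts the resulting integrated estimate into the required bound'' conceals precisely the missing idea. To repair the proof you would need to replace the Jensen/Phragm\'en--Lindel\"of step with the iterated three-circle/Borel--Carath\'eodory argument on a $\delta$-scale chain of circles, as the paper does, and you would also need the normalisation of $\mathcal{L}(s)-\alpha$ that makes $\log g$ have bounded real part from above in the half-plane $\sigma\ge\sigma_0-1$.
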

This result is a generalization of a theorem due to Littlewood \cite{ little } regarding the non-trivial zeros of $\zeta(s)$, and it yields the following corollary.

\begin{corollary}\label{Main2}
Let $\mathcal{L}\in\mathcal{S}\setminus\lbrace1\rbrace$, $b>0$ be a real number and $\alpha$ a complex number.
Then there exists a subsequence of $\alpha$-points $\left(\rho_{\alpha,n_k}\right)_{k\in\mathbb{N}}$, of $\mathcal{L}(s)$, such that $\gamma_{\alpha,n_k}=bk+o(1)$, and the sequence $\left(a \gamma_{\alpha, n_{k^m}}\right)_{k\in\mathbb{N}}$, is uniformly distributed mod 1 for every real number $a\notin b^{-1}\mathbb{Q}$ and every positive integer $m$.
\end{corollary}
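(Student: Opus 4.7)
The plan is to construct the desired subsequence directly from Theorem \ref{Main1} and then apply Weyl's criterion together with the classical fact that $(\theta k^m)_{k\geq 1}$ is uniformly distributed modulo $1$ whenever $\theta$ is irrational.

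For each positive integer $k$ with $bk\geq\exp(3)$, I would apply Theorem \ref{Main1} with $T=bk$ to obtain an $\alpha$-point $\rho_{\alpha,n_k}=\beta_{\alpha,n_k}+i\gamma_{\alpha,n_k}$ of $\mathcal{L}(s)$ satisfying
\begin{align*}
|\gamma_{\alpha,n_k}-bk|<\frac{A}{\log\log\log(bk)}.
\end{align*}
Since the right-hand side tends to zero, the windows around $bk$ and $b(k+1)$ are disjoint for all sufficiently large $k$, so the chosen $\alpha$-points are pairwise distinct. Ordering them by imaginary part and prepending an arbitrary finite initial segment yields a genuine subsequence $(\rho_{\alpha,n_k})_{k\in\mathbb{N}}$ of the $\alpha$-points of $\mathcal{L}(s)$, and by construction $\gamma_{\alpha,n_k}=bk+o(1)$.

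For the equidistribution statement, fix a positive integer $m$ and a real number $a\notin b^{-1}\mathbb{Q}$, so that $ab$ is irrational. Write $a\gamma_{\alpha,n_{k^m}}=abk^m+\epsilon_k$ with $\epsilon_k=o(1)$. By Weyl's equidistribution theorem for polynomials, the sequence $(abk^m)_{k\geq 1}$ is uniformly distributed modulo $1$, since its leading coefficient $ab$ is irrational. For any nonzero integer $h$,
\begin{align*}
\frac{1}{N}\sum_{k=1}^{N}e^{2\pi i h\,a\gamma_{\alpha,n_{k^m}}}
=\frac{1}{N}\sum_{k=1}^{N}e^{2\pi i h\,abk^m}+\frac{1}{N}\sum_{k=1}^{N}e^{2\pi i h\,abk^m}\bigl(e^{2\pi i h\epsilon_k}-1\bigr),
\end{align*}
and both terms tend to zero as $N\to\infty$: the first by Weyl's theorem, the second because $|e^{2\pi i h\epsilon_k}-1|\to 0$. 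Weyl's criterion then delivers the claimed uniform distribution of $(a\gamma_{\alpha,n_{k^m}})_{k\in\mathbb{N}}$.

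I do not foresee any serious obstacle: once Theorem \ref{Main1} is available, the corollary is essentially a packaging exercise. The only point requiring a moment of care is the disjointness of the windows around consecutive values $bk$, which guarantees distinct $\alpha$-points and hence a well-defined subsequence; this is automatic from the shrinking of $A/\log\log\log(bk)$ to zero.
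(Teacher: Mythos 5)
Your proposal is correct and follows essentially the same route as the paper: obtain $\gamma_{\alpha,n_k}=bk+o(1)$ from Theorem \ref{Main1}, invoke Weyl's theorem for $(abk^m)_k$, and conclude via stability of uniform distribution under $o(1)$ perturbations (you verify this directly from Weyl's criterion, whereas the paper cites the corresponding lemma from Kuipers--Niederreiter). Your additional remark on disjointness of the windows, ensuring a genuine subsequence, is a minor but legitimate point of care that the paper handles more casually by repeating an initial term.
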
 

Lastly, with the aid of the latter corollary, we prove a theorem which combines the vertical-distibution of $\alpha$-points with the universality of $\zeta(s)$.
\begin{theorem}\label{universality}
Let $\mathcal{L}\in\mathcal{S}\setminus\lbrace{1}\rbrace$ and $\alpha\in\mathbb{C}$. 
Then there exists a subsequence of $\alpha$-points $\left(\rho_{\alpha,n_k}\right)_{k\in\mathbb{N}}$, of $\mathcal{L}(s)$ such that for any compact set with connected complement $K\subseteq\mathcal{D}:=\left\{s\in\mathbb{C}:1/2<\sigma<1\right\}$, any non-vanishing function $f$ which is continuous on $K$ and analytic in its interior, and any $\varepsilon>0$
$$\liminf\limits_{N\to\infty}\dfrac{1}{N}\sharp\left\{1\leq k\leq N:\max\limits_{s\in K}|\zeta\left(s+i\gamma_{\alpha, n_k}\right)-f(s)|<\varepsilon)\right\}>0,$$
where $\sharp A$ denotes the cardinality of a set $A\subseteq\mathbb{N}$.
\end{theorem}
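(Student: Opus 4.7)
The idea is to combine Corollary 1 with Bagchi's probabilistic reformulation of Voronin's universality theorem, in its discrete version.

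I first fix $b=2\pi$ in Corollary 1, which produces a subsequence $(\rho_{\alpha,n_k})_{k\in\mathbb{N}}$ of $\alpha$-points of $\mathcal{L}$ such that $\gamma_{\alpha,n_k}=2\pi k+o(1)$ and such that $(a\gamma_{\alpha,n_k})_{k\in\mathbb{N}}$ is uniformly distributed modulo one for every real $a\notin (2\pi)^{-1}\mathbb{Q}$ (taking $m=1$). Since $\log N$ is irrational for every integer $N\ge 2$, the number $a=\log N/(2\pi)$ meets the hypothesis. By Weyl's criterion and unique factorisation this upgrades to the joint uniform distribution on $\mathbb{T}^r$ of
$$
\Bigl(\tfrac{\gamma_{\alpha,n_k}\log p_1}{2\pi},\dots,\tfrac{\gamma_{\alpha,n_k}\log p_r}{2\pi}\Bigr)\pmod{1},\qquad k\in\mathbb{N},
$$
for every finite set of primes $p_1<\dots<p_r$: any non-trivial integer combination $\sum_j c_j\log p_j$ equals $\log N$ for some positive rational $N\neq 1$, and $\log N/(2\pi)\notin (2\pi)^{-1}\mathbb{Q}$.

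I then feed this joint equidistribution into Bagchi's probabilistic framework. On the compact group $\Omega=\prod_p \mathbb{T}$ endowed with Haar measure and coordinate projections $\omega\mapsto\omega(p)$, the random Euler product
$$
\zeta(s,\omega)=\prod_p\bigl(1-\omega(p)p^{-s}\bigr)^{-1}
$$
defines an $H(\mathcal{D})$-valued random element. The joint equidistribution above, together with the uniform second-moment bound
$$
\frac{1}{N}\sum_{k\leq N}\bigl|\zeta(\sigma+i\gamma_{\alpha,n_k})\bigr|^2\ll 1,\qquad \sigma>1/2\ \text{fixed},
$$
implies via the standard truncation of the Euler product and mollifier argument that the empirical distributions of the shifts $\zeta(\,\cdot\,+i\gamma_{\alpha,n_k})$ converge weakly in $H(\mathcal{D})$ to the distribution of $\zeta(\,\cdot\,,\omega)$. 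Bagchi's denseness lemma identifies the support of the limit law with the closure of the non-vanishing analytic functions on $\mathcal{D}$, so every admissible $f$ has positive mass in every $\varepsilon$-neighbourhood, and the portmanteau theorem converts this into the claimed positive $\liminf$ density.

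The main obstacle is the discrete mean-square bound. Since $\gamma_{\alpha,n_{k+1}}-\gamma_{\alpha,n_k}=2\pi+o(1)$, the ordinates are asymptotically equally spaced with gaps bounded away from $0$ and $\infty$, so a Gallagher-type sampling inequality reduces the sum to the continuous second moment $\int_0^T|\zeta(\sigma+it)|^2\,dt\ll T$, which is classical for $\sigma>1/2$; the error introduced by the $o(1)$ perturbation is absorbed into a slight enlargement of the integration range.
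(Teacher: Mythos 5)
Your proposal is correct, and its crucial ingredients coincide with the paper's: fixing $b=2\pi$ in Corollary~1 to obtain quasi-periodically spaced $\alpha$-point ordinates, lifting one-dimensional equidistribution to joint equidistribution modulo one of $\bigl(\gamma_{\alpha,n_k}\tfrac{\log p}{2\pi}\bigr)_{p\in M}$ via Weyl's criterion and the irrationality of $\log N$, and, decisively, deploying Gallagher's lemma (Lemma~\ref{gall}) together with the near-$2\pi$-periodic spacing supplied by Littlewood's theorem to convert continuous mean-square estimates into discrete ones along $(\gamma_{\alpha,n_k})_k$. Where you diverge is in how the final positive-density conclusion is extracted. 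The paper argues directly: it applies \cite[Lemma 4]{P} to obtain a positive density of shifts at which the truncated Euler product approximates $f$, then uses Bohr's $L^2$-approximation of $\zeta$ by these truncations (relation \eqref{Bo}), a Bergman-space estimate passing from an $L^2$ bound on a slightly larger domain to a sup-norm bound on $K$, and Chebyshev's inequality. You instead package the endgame into Bagchi's probabilistic machinery: a discrete limit theorem in $H(\mathcal{D})$ whose limit law is that of the random Euler product $\zeta(s,\omega)$, a support computation, and the portmanteau theorem. Both routes are standard in universality theory and lead to the same result; the paper's is more self-contained and elementary, yours imports a heavier black box but is quicker to state. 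If you pursue your version, note that establishing tightness of the empirical measures and identifying the discrete limit law is precisely where the moment bound and the joint equidistribution must be welded together, and that welding is what the paper carries out explicitly in the transition from the Gallagher/Bohr estimate to the approximation inequality rather than citing it.
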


In zeta-function theory, `universality' stands for an approximation property as in the previous theorem; this name has been coined with respect to the fact that a wide class of functions can be uniformly approximated by certain shifts of a single function. The first universality theorem is due to Voronin \cite{voro}; in his result there was no condition on the shifts. The first discrete universality theorem is due to Reich \cite{reich}; in his result the shifts are taken from an arithmetic progression. 

The statement of Theorem \ref{universality} is not new in the case of $\alpha=0$; however, different from previous versions, it is unconditional. First, Garunk\v stis, Laurin\v cikas and Macaitien\.e \cite{glm} proved the above universality theorem under assumption of what they call a {\it weak Montgomery conjecture} on the spacing of the imaginary parts of the nontrivial zeros (as it would follow from the pair correlation conjecture). Secondly, Garunk\v stis and Laurin\v cikas \cite{gl} obtained the same universality result assuming the Riemann hypothesis. It appears that the statement holds unconditionally and even in the wider context of $\alpha$-points of $L$-functions from the Selberg class.

\section{The Selberg Class: Definition and First Properties}

For a survey on the Selberg class and the value-distribution of its functions we refer to Kaczorowski and Perelli \cite{KP}, Murty and Murty \cite{MM}, and Perelli \cite{Per}.
Here we give a few definitions and poperties of $L$-functions from the Selberg class which is usually denoted by $\mathcal{S}$ and was introduced by Selberg \cite{sel}. 
It consists of Dirichlet series
\begin{align*}
\mathcal{L}(s):=\sum\limits_{n=1}^{\infty}\dfrac{f(n)}{n^s},
\end{align*}
satisfying the following hypotheses:
\begin{enumerate}
\item{\it Ramanujan hypothesis.} $f(n)\ll_\epsilon n^\epsilon$.
\item{\it Analytic continuation.} There exists a non-negative integer $k$ such that $(s-1)^k\mathcal{L}(s)$ is an entire function of finite order.
\item{\it Functional equation.} $\mathcal{L}(s)$ satisfies a functional equation of type 
\begin{align*}
\mathfrak{L}(s)=\omega\overline{\mathfrak{L}(1-\overline{s})},
\end{align*}
where
\begin{align*}
\mathfrak{L}(s):=\mathcal{L}(s)Q^s\prod\limits_{j=1}^{J}\Gamma\left(\lambda_js+\mu_j\right)
\end{align*}
with positive real numbers $Q$, $\lambda_j$, and complex numbers $\mu_j$, $\omega$ with $\Re\mu_j\geq0$ and $|\omega|=1$.
\item{\it Euler product.} $\mathcal{L}(s)$ has a product represantation
\begin{align*}
\mathcal{L}(s)=\prod\limits_{p}\mathcal{L}_p(s),
\end{align*} 
where 
\begin{align*}
\mathcal{L}_p(s)=\exp\left(\sum\limits_{k=1}^{\infty}\dfrac{b(p^k)}{p^{ks}}\right)
\end{align*}
with suitable coefficients $b(p^k)$ satisfying $b(p^k)\ll p^{k\theta}$ for some $\theta<1/2$.
\end{enumerate}
Axioms (i) and (ii) imply that a function $\mathcal{L}(s)$ from $\mathcal{S}$ is a Dirichlet series which is absolutely convergent for $\sigma>1$ and it has an analytic continuation to the whole complex plane except for a possible pole at $s=1$.
From axiom (iii) one obtains the quantity
$$d_{\mathcal{L}}:=2\sum\limits_{j=1}^J\lambda_j,$$
which is called the degree of $\mathcal{L}(s)$ and, although the data from the functional equation is not unique, $d_{\mathcal{L}}$ is well-defined (by its appearance in a zero-counting formula). 
It has been conjectured that the degree is always a non-negative integer. 
This deep conjecture has been verified so far only for $d_\mathcal{L}\leq 2$ by work of Kaczorowski and Perelli \cite{KP2}. 
The only element of degree zero is the function constant $1$;
 the degree one elements are the Riemann zeta-function $\zeta(s)$ in addition with real shifts of Dirichlet $L$-functions to primitive residue class characters. 
Typical examples of degree two are Dedekind zeta-functions to quadratic number fields and $L$-functions to normalized newforms. 
All these examples posses an Euler product and, indeed, axiom (iv) implies that if $\mathcal{L}\in\mathcal{S}$, then it is zero-free in the half-plane $\sigma>1$ and $f(n)$ is a multiplicative function.

The functional equation of a function from the Selberg class will play a crucial role in the proofs presented here.
We prefer to rewrite it as 
\begin{align}\label{functequa}
\mathcal{L}(s)=H_\mathcal{L}(s)\overline{\mathcal{L}(1-\overline{s})},
\end{align}
valid for all $s\in\mathbb{C}$, where 
\begin{align*}
H_\mathcal{L}(s)=\omega Q^{1-2s}\prod\limits_{j=1}^J\dfrac{\Gamma(\lambda_j(1-s)+\overline{\mu_j})}{\Gamma(\lambda_js+\mu_j)}.
\end{align*}
By Stirling's formula, it follows that
\begin{align}\label{Stril}
H_\mathcal{L}(\sigma+it)=\left(\lambda Q^2 t^{d_\mathcal{L}}\right)^{1/2-\sigma}\exp\left(-it\log\left(\lambda Q^2\left(\dfrac{t}{e}\right)^{d_\mathcal{L}}\right)\right)e^{i\pi(\mu-d_\mathcal{L})/4}\left(\omega+O_{\sigma_1,\sigma_2}\left(\dfrac{1}{t}\right)\right)
\end{align}
and
\begin{align}\label{fS}
-\dfrac{H'_\mathcal{L}}{H_\mathcal{L}}(\sigma+it)=\log\left(\lambda Q^2\left(\dfrac{t}{e}\right)^{d_\mathcal{L}}\right)+O_{\sigma_1,\sigma_2}\left(\dfrac{1}{t}\right),
\end{align}
uniformly in vertical strips $\sigma_1\leq\sigma\leq\sigma_2$, where 
$$\mu:=2\sum\limits_{j=1}^J(1-2\mu_j)\,\,\,\text{ and }\,\,\,\lambda:=\prod\limits_{j=1}^J\lambda_j^{2\lambda_j}.$$
In addition to axiom (iii), we have 
\begin{align}\label{order}
|\mathcal{L}(\sigma+it)|\asymp_{\sigma_1,\sigma_2}|t|^{(1/2-\sigma)d_\mathcal{L}}|\mathcal{L}(1-\sigma+it)|,\,\,\,|t|\geq t_0>0,
\end{align}
uniformly in $\sigma_1\leq\sigma\leq\sigma_2$. 

Concerning the distribution of the $\alpha$-points of a non-constant element $\mathcal{L}\in\mathcal{S}\setminus\lbrace1\rbrace$ of the Selberg class, it is worth to mention that their location in general is pretty similar to the case of the Riemann zeta-function. 
There may be an $\alpha$-point in the neighbourhood of a pole of a Gamma-factor in the functional equation (by Rouch\'e's theorem), however, those so-called trivial $\alpha$-points lie not too distant from the negative real axis and with finitely many further exceptions there are no other in the left half-plane $\sigma\leq0$. 
All other $\alpha$-points are said to be non-trivial and they lie in the right half-plane $\sigma\geq0$.
Their number $\mathcal{N}(\alpha,T)$ is asymptotically given by a Riemann-von Mangoldt--type formula: if $\alpha\neq1$ is a complex number, then
\begin{align}\label{RMf}
\mathcal{N}(\alpha,T):=\left\{s\in\mathbb{C}:\mathcal{L}(s)=\alpha\text{ and }T<t\leq2 T\right\}=\dfrac{d_\mathcal{L}}{2\pi}T\log\dfrac{4T}{e}+\dfrac{T}{2\pi}\log(\lambda Q^2)+O(\log T).
\end{align}
The proofs of \eqref{Stril}-\eqref{RMf} can be found in \cite[Lemma 6.7, Theorem 6.8, Corollary 7.4]{S}).

\section{Auxiliary Lemmas}

Before stating our first lemma we remind the Dirichlet convolution of two arithmetical functions.
If $f,g:\mathbb{N}\to\mathbb{C}$, then their Dirichlet convolution (or multiplication) is defined by
$$(f*g)(n):=\sum\limits_{d\mid n}f(d)g\left(\dfrac{n}{d}\right)$$
for all $n\in\mathbb{N}$. 
This operation is associative and we denote by $f^{* k}$ the Dirichlet convolution of $f$ with itself $k$ times for some $k\in\mathbb{N}$.
Lastly, we denote by $f_0$ the arithmetical function which is identical to $f$ except at $n=1$, where we define $f_0(1):=0$.

\begin{lemma}\label{Init}
Let $\mathcal{L}\in \mathcal{S}$ and $\alpha\neq1$ be a complex number.
 If 
 \begin{align}\label{aabscissa}
 \sigma(\mathcal{L},\alpha)
 :=
 \left\{ \begin{array}{ll}
1+\sup_{\rho_\alpha}\beta_\alpha,&\alpha\neq0,\\
 1,&\alpha=0.
 \end{array}
 \right.
 \end{align}
 then $1\leq\sigma(\mathcal{L},\alpha)<\infty$ and the function $\mathcal{L}'(s)/\left(\mathcal{L}(s)-\alpha\right)$ can be represented as an ordinary Dirichlet series which is absolutely convergent in the half-plane $\sigma>\sigma\left(\mathcal{L},\alpha\right)$. In particular,
\begin{align*}
\dfrac{\mathcal{L}'(s)}{\mathcal{L}(s)-\alpha}=-\sum\limits_{n=1}^{\infty}\dfrac{\Lambda(n;\mathcal{L},\alpha)}{n^s}
\end{align*}
for any $\sigma>\sigma\left(\mathcal{L},\alpha\right)$, where
\begin{align}\label{vMal}
\Lambda\left(n;\mathcal{L},\alpha\right):=\left(f\cdot\log\right)*\left(\sum\limits_{k=0}^{\infty}\dfrac{(-1)^kf_0^{*k}}{(1-\alpha)^{k+1}}\right)(n),
\end{align}
for any positive integer $n$.
Moreover, $\Lambda\left(n;\mathcal{L},\alpha\right)\ll_\epsilon n^{\sigma(\mathcal{L},\alpha)-1+\epsilon}$.
\end{lemma}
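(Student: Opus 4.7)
The proof naturally splits into three parts, which I would address in this order: (a) the numerical bounds $1 \leq \sigma(\mathcal{L},\alpha) < \infty$; (b) the Dirichlet series identity with the explicit formula \eqref{vMal}; (c) the coefficient bound $\Lambda(n;\mathcal{L},\alpha) \ll_\epsilon n^{\sigma(\mathcal{L},\alpha)-1+\epsilon}$ and the resulting absolute convergence on $\sigma > \sigma(\mathcal{L},\alpha)$.

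For (a), the upper bound is a direct consequence of the Ramanujan hypothesis: since $|\mathcal{L}(\sigma+it) - 1| \leq \sum_{n \geq 2}|f(n)|/n^\sigma \ll_\epsilon \sum_{n \geq 2} n^{\epsilon - \sigma}$ tends to $0$ uniformly in $t$ as $\sigma \to \infty$, one has $\mathcal{L}(s) \neq \alpha$ for $\sigma$ sufficiently large depending on $|1-\alpha|$, forcing $\sup \beta_\alpha < \infty$. The lower bound $\sigma(\mathcal{L},\alpha) \geq 1$ is trivial when $\alpha = 0$. For $\alpha \neq 0$ it follows from \eqref{RMf}, which exhibits infinitely many non-trivial $\alpha$-points, all lying in the closed right half-plane by the discussion preceding \eqref{RMf}; hence $\sup \beta_\alpha \geq 0$.

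For (b), write $\mathcal{L}(s) = 1 + F_0(s)$ with $F_0(s) := \sum_{n\geq 1} f_0(n)/n^s$, so that $\mathcal{L}(s) - \alpha = (1-\alpha)(1 + F_0(s)/(1-\alpha))$. For $\sigma$ large enough that $|F_0(s)/(1-\alpha)| < 1$ --- possible since $|F_0(\sigma)| \to 0$ as $\sigma \to \infty$ --- the reciprocal expands as a Neumann series
\[
\frac{1}{\mathcal{L}(s) - \alpha} = \sum_{k=0}^\infty \frac{(-1)^k}{(1-\alpha)^{k+1}}F_0(s)^k.
\]
Each power $F_0(s)^k$ is an absolutely convergent Dirichlet series with coefficients $f_0^{*k}(n)$, and the key observation is that $f_0(1) = 0$ forces $f_0^{*k}(n) = 0$ whenever $n < 2^k$; hence for every fixed $n$ the outer $k$-sum is in fact finite. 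This legitimizes the interchange of summations and gives $1/(\mathcal{L}(s) - \alpha) = \sum_n a(n)/n^s$ with $a(n)$ equal to the inner sum in \eqref{vMal}. Multiplying by $-\mathcal{L}'(s) = \sum (f(n)\log n)/n^s$ and applying the Dirichlet convolution rule yields \eqref{vMal}.

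For (c), I would apply a truncated Perron formula to the partial sum $A(N) := \sum_{n \leq N}\Lambda(n;\mathcal{L},\alpha)$ using the Dirichlet series representation established in (b), and then shift the contour across all $\alpha$-points to pick up their residues. Each residue at $\rho_\alpha$ contributes a term of order $N^{\beta_\alpha}/|\rho_\alpha|$ weighted by multiplicity, and \eqref{RMf} renders the total sum of residues $O(N^{\sigma(\mathcal{L},\alpha)-1+\epsilon})$ once the truncation parameter $T$ is balanced optimally against $N$. The remaining contour integrals are controlled via the Stirling-type asymptotics \eqref{Stril}--\eqref{fS} for $H_\mathcal{L}$, the functional equation \eqref{functequa}, and the convexity estimate \eqref{order}, arranging horizontal segments to avoid $\alpha$-points. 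Differencing $A(N) - A(N-1)$ then yields the pointwise bound for $\Lambda(n;\mathcal{L},\alpha)$, which makes $\sum_n\Lambda(n;\mathcal{L},\alpha)/n^s$ absolutely convergent for $\sigma > \sigma(\mathcal{L},\alpha)$; the identity with $-\mathcal{L}'/(\mathcal{L}-\alpha)$ in this half-plane follows by analytic continuation from the subregion where the Neumann series directly applies.

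The main obstacle is (c). Parts (a) and (b) are essentially formal --- the vanishing $f_0^{*k}(n) = 0$ for $n < 2^k$ does all the work in (b), and (a) uses only Ramanujan plus \eqref{RMf}. The genuine difficulty is sharpening the bound on $\Lambda(n;\mathcal{L},\alpha)$ from the weak exponent $\sigma(\mathcal{L},\alpha)+\epsilon$ that would follow immediately from absolute convergence to the claimed $\sigma(\mathcal{L},\alpha)-1+\epsilon$: this requires pushing a contour across the line of $\alpha$-points and accounting for them via \eqref{RMf}, the same circle of techniques underlying Theorems \ref{MAIN1}--\ref{MAIN3} but applied here in a simpler averaged form.
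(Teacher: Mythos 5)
Your plan for parts (a) and (b) matches the paper: the paper also uses the decay of $\mathcal{L}(\sigma+it)-1$ as $\sigma\to\infty$ (phrased as the uniqueness theorem for Dirichlet series) to get $\sigma(\mathcal{L},\alpha)<\infty$, expands $1/(\mathcal{L}(s)-\alpha)$ as a Neumann series in a half-plane $\sigma>B$, and observes that $f_0^{*k}(n)=0$ for $n<2^k$ makes the double sum reorganize into an ordinary Dirichlet series. The case $\alpha=0$ is handled separately and trivially in the paper (where $\Lambda(n;\mathcal{L},0)=f(n)\Lambda(n)$), which you leave implicit but which follows from \eqref{vMal}.

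Where you diverge is part (c), and there your route is substantially heavier than what the paper actually does. The paper does not use any Perron-type contour argument at all: it simply cites Landau \cite[Satz 12]{Lan}, a classical theorem asserting that the ordinary Dirichlet series representing $1/(\mathcal{L}(s)-\alpha)$ (already known to converge in $\sigma>B$) must in fact converge in the entire half-plane $\sigma>\sup_{\rho_\alpha}\beta_\alpha$ where the function is holomorphic and of finite order. Convergence at a single point $\sigma_0+it_0$ with $\sigma_0>\sup\beta_\alpha$ immediately forces $g(n)=o(n^{\sigma_0})$, hence $g(n)\ll_\epsilon n^{\sigma(\mathcal{L},\alpha)-1+\epsilon}$, and absolute convergence in $\sigma>\sigma(\mathcal{L},\alpha)$. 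The bound on $\Lambda(n;\mathcal{L},\alpha)$ then follows by convolving with $f(n)\log n\ll_\epsilon n^\epsilon$. Your Perron-plus-contour-shift plan can be made to work, but it is a substantial additional construction that re-derives exactly what Landau's abscissa theorem gives for free, and executing it carefully would require you to (i) balance the Perron truncation error against $T$, (ii) choose horizontal segments avoiding $\alpha$-points via \eqref{RMf}, (iii) control the left vertical segment using \eqref{functequa}, \eqref{fS}, \eqref{order}, and (iv) verify the residue sum is genuinely $\ll N^{\sigma(\mathcal{L},\alpha)-1}(\log T)^2$. None of this is needed; the point of the paper's proof is precisely that Landau's Satz 12 short-circuits the entire analytic machinery here. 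So your proposal is correct in spirit but does the hardest part in a far more roundabout way than necessary, and you should be aware that the sharper coefficient bound in the lemma is not an analytic fact requiring contour integration but a direct consequence of a classical convergence theorem for Dirichlet series.
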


\begin{proof}
When $\alpha=0$ there is nothing to prove because for $\mathcal{L}\in S$ we know that
$$\dfrac{\mathcal{L}'(s)}{\mathcal{L}(s)}=-\sum\limits_{n=1}^{\infty}\dfrac{\Lambda(n;\mathcal{L},0)}{n^s},\,\,\,\sigma>1,$$
where $\Lambda(n;\mathcal{L},0)=f(n)\Lambda(n)$ and $\Lambda(n)$ is the von Mangoldt function.
Since $\Lambda(n)\ll\log n$, the Ramanujan hypothesis for the coefficients $f(n)$ imply that $\Lambda(n;\mathcal{L},0)\ll_\epsilon n^\epsilon$.

Now let $\alpha\neq0$.
Then
\begin{align*}
\mathcal{L}(s)-\alpha=(1-\alpha)\left(1+\sum\limits_{n=2}^\infty\dfrac{f(n)}{(1-\alpha)n^s}\right)
\end{align*}
is an ordinary Dirichlet series and the uniqueness theorem for Dirichlet series (see \cite[\S 9.6]{Titch1}) implies that there is an $A>0$ such that $\mathcal{L}(s)-\alpha\neq0$ for any $\sigma>A$.
Therefore, $$0\leq\sigma(\mathcal{L},\alpha)-1\leq A.$$
Moreover, it follows from \cite[Appendix, pg. 89-90]{Lan} that there is $B>0$ such that
\begin{align}\label{inv}
\dfrac{1}{\mathcal{L}(s)-\alpha}=\dfrac{1}{1-\alpha}\sum\limits_{k=0}^\infty(-1)^k\left(\sum\limits_{n=2}^\infty\dfrac{f(n)}{1-\alpha}\dfrac{1}{n^s}\right)^k=\sum\limits_{n=1}^\infty\left(\sum\limits_{k=0}^{\infty}\dfrac{(-1)^kf_0^{*k}(n)}{(1-\alpha)^{k+1}}\right)\dfrac{1}{n^s}
\end{align}
for any $\sigma>B$, while from \cite[Satz 12]{Lan} we know that the Dirichlet series on the right hand side of the latter relation converges for every $\sigma>\sigma(\mathcal{L},\alpha)-1$.
Hence, the aforementioned series is a Dirichlet series representation of $1/(\mathcal{L}(s)-\alpha)$ in the  half-plane $\sigma>\sigma(\mathcal{L},\alpha)-1$. 
This implies that it is absolutely convergent in the half-plane $\sigma>\sigma(\mathcal{L},\alpha)$ and that
\begin{align}\label{g}
g(n):=\sum\limits_{k=0}^{\infty}\dfrac{(-1)^kf_0^{*k}(n)}{(1-\alpha)^{k+1}}\ll_\epsilon n^{\sigma\left(\mathcal{L},\alpha\right)-1+\epsilon}.
\end{align}
On the other hand, the series
\begin{align}\label{der}
\mathcal{L}'(s)=-\sum\limits_{n=1}^{\infty}\dfrac{f(n)\log n}{n^s}
\end{align}
is absolutely convergent for any $\sigma>1$. 
The first assertion of the lemma follows now by mutliplying the Dirichlet series \eqref{inv} and \eqref{der}. 
In addition, relation \eqref{g} and the Ramanujan hypothesis yield that the coefficients of the resulting series satisfy
\begin{align*}
\Lambda\left(n;\mathcal{L},\alpha\right)=-\sum\limits_{d\mid n}g\left(\dfrac{n}{d}\right)f(d)\log d \ll_\epsilon \sum\limits_{d\mid n}\left(\dfrac{n}{d}\right)^{\sigma\left(\mathcal{L},\alpha\right)-1+\epsilon/2}d^{\epsilon/2}\ll_\epsilon n^{\sigma\left(\mathcal{L},\alpha\right)-1+\epsilon}.
\end{align*}
\end{proof}

\begin{lemma}\label{hori}
Let $\mathcal{L}\in\mathcal{S}$ and $\alpha$ be a complex number. 
Then
\begin{align*}
\dfrac{\mathcal{L}'(s)}{\mathcal{L}(s)-\alpha}=\sum\limits_{|\gamma_{\alpha}-t|\leq1}\dfrac{1}{s-\rho_\alpha}+O(\log|t|),\,\,\,|t|\geq1,
\end{align*}
uniformly in $-1\leq\sigma\leq\sigma(\mathcal{L},\alpha)$.
\end{lemma}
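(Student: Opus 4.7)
This is the Selberg-class analogue of the classical Landau--Titchmarsh partial-fraction estimate for $\zeta'/\zeta$ near a non-trivial zero, and I would prove it by the standard method: apply Borel--Carath\'eodory's theorem to $\mathcal{L}(s')-\alpha$ on a fixed disk centred to the right of the critical strip, with the nearby $\alpha$-points factored out by a Blaschke-type product. Fix $s=\sigma+it$ with $-1\leq\sigma\leq\sigma(\mathcal{L},\alpha)$ and $|t|\geq 1$, and put $\sigma_0:=\sigma(\mathcal{L},\alpha)+1$, $s_0:=\sigma_0+it$. Choose $R$, depending only on $\mathcal{L}$ and $\alpha$, large enough that the closed disk $\overline{D}:=\{s':|s'-s_0|\leq R\}$ contains the point $s$ inside its half-radius sub-disk and contains every $\alpha$-point $\rho_\alpha$ with $|\gamma_\alpha-t|\leq 1$; this is possible because all non-trivial $\alpha$-points satisfy $\beta_\alpha\leq\sigma(\mathcal{L},\alpha)-1$ and, for $|t|\geq 2$, no trivial $\alpha$-point lies within vertical distance $1$ of $s$ (the residual range $|t|\in[1,2)$ is absorbed into the implicit constant). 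By Lemma~\ref{Init} the Dirichlet series for $\mathcal{L}'(s')/(\mathcal{L}(s')-\alpha)$ converges absolutely on $\Re s'=\sigma_0$, giving $|\mathcal{L}(s_0)-\alpha|\gg 1$ uniformly in $t$.

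\textbf{Two inputs on $\overline{D}$.} First, a uniform growth bound $|\mathcal{L}(s')-\alpha|\ll|t|^{C}$ for some $C=C(\mathcal{L})>0$: this is immediate when $\Re s'\geq\sigma(\mathcal{L},\alpha)+\tfrac12$ from the Dirichlet series; for $\Re s'\leq 0$ it follows from the functional equation~\eqref{functequa} combined with Stirling's formula~\eqref{Stril} (equivalently~\eqref{order}); the intermediate range is then bridged by the Phragm\'en--Lindel\"of convexity principle. Second, the Riemann--von Mangoldt formula~\eqref{RMf} applied to $O(1)$ consecutive unit intervals in $t$ shows that the number $N$ of $\alpha$-points inside $\overline{D}$ is $O(\log|t|)$. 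Enumerating these with multiplicity as $\rho_1,\ldots,\rho_N$, form the Blaschke-modified function
$$h(s'):=(\mathcal{L}(s')-\alpha)\prod_{j=1}^{N}\frac{R^2-\overline{(\rho_j-s_0)}(s'-s_0)}{R(s'-\rho_j)},$$
which is analytic and non-vanishing on $\overline{D}$; since each Blaschke factor has unit modulus on the boundary and modulus at least one inside, $|h(s')|=|\mathcal{L}(s')-\alpha|\ll|t|^C$ on $|s'-s_0|=R$ and $|h(s_0)|\geq|\mathcal{L}(s_0)-\alpha|\gg 1$. Borel--Carath\'eodory applied to $\log(h/h(s_0))$ on concentric disks of radii $R$ and $3R/4$, followed by Cauchy's derivative estimate on the disk of radius $R/2$ (which contains $s$), yields $h'(s)/h(s)\ll\log(\max|h|/|h(s_0)|)\ll\log|t|$. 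The logarithmic derivative of each Blaschke factor is $O(1)$ on this smaller disk, so unwinding the definition of $h$ gives
$$\frac{\mathcal{L}'(s)}{\mathcal{L}(s)-\alpha}=\sum_{j=1}^{N}\frac{1}{s-\rho_j}+O(\log|t|).$$

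\textbf{Trimming and main obstacle.} It remains to replace the summation over $\rho_j\in\overline{D}$ by the one over $|\gamma_\alpha-t|\leq 1$ in the statement: by construction no $\alpha$-point in the latter range is missed, while every $\rho_j\in\overline{D}$ with $|\gamma_\alpha-t|>1$ satisfies $|s-\rho_j|\geq|\gamma_\alpha-t|>1$ and there are at most $N\ll\log|t|$ such points, so the total adjustment is $O(\log|t|)$ and is absorbed into the error. The central technical step, and the only place where the Selberg-class axioms enter in an essential way, is establishing the uniform polynomial growth bound $|\mathcal{L}(s')-\alpha|\ll|t|^{C}$ across the whole of $\overline{D}$; this marries the Dirichlet series on its right half with the functional equation and Stirling's asymptotics on its left, bridged by Phragm\'en--Lindel\"of, and is the only step in which the dependence of $C$ on the degree $d_\mathcal{L}$ manifests.
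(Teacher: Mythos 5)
Your argument is the standard Borel--Carath\'eodory / Blaschke-product proof, and it coincides with what the paper invokes: the paper defers to [GS, Lemma~8], whose proof for $\zeta'/(\zeta-\alpha)$ proceeds by exactly this route (polynomial growth bound, $O(\log|t|)$ zero count, Borel--Carath\'eodory plus Cauchy on nested disks, trim the sum). One small imprecision to flag: absolute convergence of the Dirichlet series for $\mathcal{L}'(s)/(\mathcal{L}(s)-\alpha)$ at $\sigma_0$ bounds the quotient $|\mathcal{L}'(s_0)/(\mathcal{L}(s_0)-\alpha)|$ from above, but by itself this does not yield $|\mathcal{L}(s_0)-\alpha|\gg1$, since numerator and denominator could simultaneously become small. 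The clean input, also furnished by the proof of Lemma~\ref{Init}, is the absolutely convergent Dirichlet series representation of $1/(\mathcal{L}(s)-\alpha)$ in $\sigma>\sigma(\mathcal{L},\alpha)$ (relation~\eqref{inv}), which gives $|1/(\mathcal{L}(s_0)-\alpha)|\ll1$ uniformly in $t$ and hence the desired lower bound. With that small fix, your reconstruction is correct and aligns with the cited argument.
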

\begin{proof}
The proof follows in exactly the same way, except of some minor changes, as the one of \cite[Lemma 8]{GS} in the case of $\zeta(s)$.
\end{proof}

\begin{lemma}\label{lb}
Let $\mathcal{L}\in\mathcal{S}$ and $\sigma_1\leq\sigma_2<0$. 
Then
\begin{align*}
|\mathcal{L}(\sigma+it)|\gg_{\sigma_1,\sigma_2} |t|^{(1/2-\sigma)d_\mathcal{L}},\,\,\,|t|\geq t_0>0,
\end{align*}
uniformly in $\sigma_1\leq\sigma\leq\sigma_2$.
\end{lemma}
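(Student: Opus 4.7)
The plan is to reflect the strip $\sigma_1\leq\sigma\leq\sigma_2<0$ across the critical line by means of the functional equation \eqref{functequa}, and then extract the desired lower bound from the fact that the reflected strip lies safely inside the half-plane of absolute convergence, where the Euler product forces $|\mathcal{L}|\gg 1$.

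Concretely, I would first invoke the asymptotic \eqref{order}, which is exactly what \eqref{functequa} combined with the Stirling estimate \eqref{Stril} delivers:
$$|\mathcal{L}(\sigma+it)|\asymp_{\sigma_1,\sigma_2}|t|^{(1/2-\sigma)d_\mathcal{L}}\,|\mathcal{L}(1-\sigma+it)|,\qquad |t|\geq t_0,$$
uniformly in $\sigma_1\leq\sigma\leq\sigma_2$. This reduces the proof to establishing the uniform lower bound $|\mathcal{L}(1-\sigma+it)|\gg_{\sigma_1,\sigma_2}1$. Since $\sigma_2<0$, the reflected abscissa satisfies $1-\sigma\geq 1-\sigma_2>1$, so we live entirely inside the half-plane $\Re s\geq\sigma_0:=1-\sigma_2>1$, where the Dirichlet series of $\mathcal{L}$ converges absolutely and $\mathcal{L}$ is zero-free by axiom (iv).

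To obtain the uniform lower bound I would take logarithms in the Euler product $\mathcal{L}(s)=\prod_p\mathcal{L}_p(s)$. Expanding $\mathcal{L}_p(s)=1+f(p)p^{-s}+O(p^{\epsilon-2\sigma})$ with the help of the Ramanujan hypothesis (axiom (i)) yields $|\log\mathcal{L}_p(s)|\ll_\epsilon p^{\epsilon-\sigma}$ for all sufficiently large $p$. Choosing $\epsilon\in(0,\sigma_0-1)$ makes $\sum_p|\log\mathcal{L}_p(s)|$ uniformly bounded by some constant $C(\sigma_0)$ on $\Re s\geq\sigma_0$, and therefore $|\mathcal{L}(s)|\geq e^{-C(\sigma_0)}$ throughout that half-plane. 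Substituting this back into the asymptotic above delivers the conclusion. No step poses a serious obstacle, since the estimate for $|\log\mathcal{L}_p(s)|$ depends only on $\Re s$ and is thus automatically uniform in $t$; the only point deserving a moment's care is the verification that the constants produced are indeed independent of $t$ and of $\sigma$ within the strip.
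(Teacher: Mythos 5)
Your proof is correct, and it takes a genuinely different route from the paper's in the key step. Both you and the authors begin identically: invoke the asymptotic \eqref{order} to reduce the lemma to the claim $\inf\{|\mathcal{L}(s)| : 1-\sigma_2 \le \Re s \le 1-\sigma_1\} > 0$, a strip lying entirely to the right of $\sigma=1$. Where the two arguments part ways is in establishing this uniform lower bound. The paper observes that $\mathcal{L}$, being a Dirichlet series uniformly convergent in that strip, is a uniformly almost periodic analytic function; since it is also zero-free there (a soft consequence of axiom (iv)), a theorem of Bohr--Besicovitch on almost periodic functions immediately gives a positive infimum. You instead work directly with the Euler product: taking logarithms factor by factor, bounding $|\log\mathcal{L}_p(s)|$ by $p^{\epsilon-\sigma}$ for large $p$ via the Ramanujan hypothesis (and using $b(p^k)\ll p^{k\theta}$, $\theta<1/2$, to control the $k\ge 2$ terms and the small primes), and summing over $p$ to get $|\log\mathcal{L}(s)| \le C(\sigma_0)$ uniformly in $\Re s \ge \sigma_0 := 1-\sigma_2 > 1$, whence $|\mathcal{L}(s)|\ge e^{-C(\sigma_0)}$. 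Your route is more elementary and self-contained, replacing the appeal to almost-periodicity theory with an explicit quantitative estimate; the trade-off is that you lean on the full strength of axiom (iv), including the growth condition on the $b(p^k)$, whereas the paper only needs the qualitative zero-freeness for $\sigma>1$. One small point you should state explicitly: for the finitely many small primes, $|\log\mathcal{L}_p(s)|$ is bounded uniformly in $t$ because $\sum_k |b(p^k)|p^{-k\sigma}$ converges absolutely for $\sigma>\theta$; as written, your estimate $|\log\mathcal{L}_p(s)|\ll_\epsilon p^{\epsilon-\sigma}$ is only valid for $p$ large enough that $|\mathcal{L}_p(s)-1|<1/2$, and without this supplement the argument has a (easily filled) gap.
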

\begin{proof}
We refer to \cite[Chapter III, \S 2]{bes} for an introduction to the theory of analytic almost periodic functions and their elementary properties.
Since the Dirichlet series $\mathcal{L}(s)$ is uniformly convergent in the strip $1<1-\sigma_2\leq\sigma\leq1-\sigma_1$,
it is a uniformly almost periodic function in that strip.
In addition, $\mathcal{L}(s)$ is non-vanishing in $\sigma>1$ by assumption.
Hence,
$\inf\left\{\left|\mathcal{L}(s)\right|:1-\sigma_2\leq\sigma\leq1-\sigma_1\right\}>0$ by \cite[\S 2, Corollary 1]{bes}.
 Taking also into account relation \eqref{order} the lemma follows.
\end{proof}

%

The following lemma originates from the work of Levinson \cite[Lemma 3.3, Lemma 3.4]{Lev} and it has been improved by Gonek in \cite[Lemma 1, Lemma 2]{gon}.
Although the proof there is given for a fixed real number $a$, one can easily see that it holds uniformly in any finite interval $[a_1,a_2]$.
\begin{lemma}\label{Gonek}
Let $a_1, a_2$ be fixed real numbers such that $a_1\leq a_2$.
Then, for all  sufficiently large numbers $A,B\gg_{a_1,a_2}1$ with $A<B\leq2A$, we have that
\begin{align}\label{GoN}
\mathcal{I}(r,a,A,B):=\int\limits_{A}^B\exp\left(it\log\dfrac{t}{re}\right)\left(\dfrac{t}{2\pi}\right)^{a-1/2}\mathrm{d}t=M(r,A,B)+E(r,A,B),
\end{align}
uniformly in $a_1\leq a\leq a_2$,
where
\begin{align*}
M(r,a,A,B)=
\left\{
\begin{array}{ll}
(2\pi)^{1-a}r^ae^{-ir+i\pi/4},&A<r\leq B,\\
0,&\text{otherwise},
\end{array}
\right.
\end{align*}
and
\begin{align*}
E(r,a,A,B)\ll_{a_1, a_2} A^{a-1/2}+\dfrac{A^{a+1/2}}{|A-r|+A^{1/2}}+\dfrac{B^{a+1/2}}{|B-r|+B^{1/2}}.
\end{align*}
\end{lemma}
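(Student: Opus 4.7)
The plan is to view $\mathcal{I}(r,a,A,B)$ as an oscillatory integral with phase $\phi(t) = t\log(t/(re))$ and amplitude $(t/(2\pi))^{a-1/2}$, and to apply the method of stationary phase. Since $\phi'(t)=\log(t/r)$ vanishes precisely at $t=r$, with $\phi(r)=-r$ and $\phi''(r)=1/r$, the expected saddle contribution is
$$e^{i\phi(r)}\left(\tfrac{r}{2\pi}\right)^{a-1/2}\sqrt{\tfrac{2\pi}{\phi''(r)}}\,e^{i\pi/4} = (2\pi)^{1-a}\,r^{a}\,e^{-ir+i\pi/4},$$
which is exactly $M(r,a,A,B)$, and it is present only when the critical point $r$ lies inside the integration range $(A,B]$; this explains the piecewise shape of the main term.

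The argument then splits into two regimes. When $r\notin[A,B]$, the derivative $\phi'(t)=\log(t/r)$ is bounded away from zero throughout $[A,B]$, and two successive integrations by parts using $e^{i\phi(t)}=\partial_t(e^{i\phi(t)})/(i\phi'(t))$ yield only boundary contributions at $t=A$ and $t=B$. When $r\in[A,B]$, I would isolate a Fresnel window $|t-r|\leq c\sqrt{r}$ for a small absolute constant $c$, substitute $t=r+u\sqrt{r}$, Taylor-expand $\phi$ as $-r+u^{2}/2+O(u^{3}/\sqrt{r})$, and reduce the central contribution to the Gaussian integral $\int_{-\infty}^{\infty} e^{iu^{2}/2}\,\mathrm{d}u=\sqrt{2\pi}\,e^{i\pi/4}$; the higher-order terms are absorbed into the $A^{a-1/2}$ error. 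On the complement of the window, integration by parts again returns boundary expressions.

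Uniformity in $a\in[a_1,a_2]$, absent from Gonek's original formulation, is the only new ingredient, and it is not serious: the amplitude $(t/(2\pi))^{a-1/2}$ and its $t$-derivatives are uniformly controlled by $t^{a-1/2}$ times powers of $\log t$ as $a$ varies over a compact interval, so the implicit constants produced by each integration-by-parts step depend only on $a_{1}$ and $a_{2}$, and the logarithmic losses are absorbed into the stated $A^{a\pm 1/2}$ factors.

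The main obstacle is the careful matching at the boundary of the Fresnel window, so that the saddle estimate in the central part patches with the integration-by-parts estimates outside it to produce exactly the denominators $|A-r|+A^{1/2}$ and $|B-r|+B^{1/2}$, rather than the naive $|A-r|$ and $|B-r|$, which would blow up when $r$ lies within $A^{1/2}$ of an endpoint. This bookkeeping is the delicate step in Gonek's treatment; once it is performed uniformly in $a$, which by the previous paragraph requires no additional work, the lemma follows.
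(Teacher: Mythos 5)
Your stationary-phase reconstruction is exactly the approach behind the cited Levinson and Gonek lemmas, and since the paper itself gives no proof here but simply refers to \cite[Lemma 3.3, Lemma 3.4]{Lev} and \cite[Lemma 1, Lemma 2]{gon} with the remark that uniformity in $a$ over a compact interval is immediate, your sketch matches the paper's intended route and gives more detail than the paper does. One small imprecision worth fixing: when $r\notin[A,B]$ but lies within $O(A^{1/2})$ of an endpoint, $\phi'(t)=\log(t/r)$ is \emph{not} bounded away from zero on $[A,B]$ (it is merely nonzero, of size $\approx|A-r|/A$ at $t=A$), so a pure integration-by-parts argument yields the naive $A^{a+1/2}/|A-r|$ and does not by itself give the capped denominators $|A-r|+A^{1/2}$; one must invoke the second-derivative (van der Corput) estimate on a short piece near the endpoint even in the ``no stationary point inside'' case, not only when $r\in[A,B]$. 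You allude to this in your final paragraph as the delicate bookkeeping, and that is indeed the step Gonek carries out carefully; making your ``bounded away from zero'' sentence consistent with that later remark would close the only loose seam in an otherwise correct outline.
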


\begin{lemma}[Gallagher's lemma]\label{gall}
Let $T_0$ and $T\geq\delta>0$ be real numbers and $A$ be a finite subset of $\left[T_0+\delta/2, T+T_0-\delta/2\right]$. 
Define $N_\delta(x)=\sum_{t\in A,|t-x|<\delta}1$ and assume that $f(x)$ is a complex-valued continuous function on $\left[T_0,T+T_0\right]$ continuously differentiable on $\left(T_0,T+T_0\right)$. Then
\begin{align*}
\sum\limits_{t\in A}N_{\delta}^{-1}(t)|f(t)|^2\leq\dfrac{1}{\delta}\int\limits_{T_0}^{T+T_0}|f(x)|^2\mathrm{d}x+\left(\int\limits_{T_0}^{T+T_0}|f(x)|^2\mathrm{d}x\int\limits_{T_0}^{T+T_0}|f'(x)|^2\mathrm{d}x\right)^{1/2}.
\end{align*}
\end{lemma}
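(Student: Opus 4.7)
The plan is to establish a pointwise majorization of $|f(t)|^2$ by a local $L^2$ average (plus a derivative correction) and then sum over $t\in A$, using the weights $N_\delta^{-1}(t)$ to neutralise the overlap of the averaging intervals. For each $t\in A$, the hypothesis $t\in[T_0+\delta/2,T+T_0-\delta/2]$ ensures $[t-\delta/2,t+\delta/2]\subseteq[T_0,T+T_0]$, and the fundamental theorem of calculus gives $|f(t)|^2-|f(x)|^2=2\,\mathrm{Re}\int_x^t f(y)\overline{f'(y)}\,dy$ for every $x$ in that interval. Averaging in $x$ over $[t-\delta/2,t+\delta/2]$ yields
\begin{align*}
|f(t)|^2=\frac{1}{\delta}\int_{t-\delta/2}^{t+\delta/2}|f(x)|^2\,dx+\frac{2}{\delta}\,\mathrm{Re}\,I_t,\quad I_t:=\int_{t-\delta/2}^{t+\delta/2}\int_x^t f(y)\overline{f'(y)}\,dy\,dx.
\end{align*}

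The technical heart of the argument is to estimate $I_t$ without losing a factor of $2$. Splitting the outer integration at $t$ and applying Fubini recasts $I_t$ as $\int_{t-\delta/2}^{t+\delta/2} w(y)f(y)\overline{f'(y)}\,dy$ with the piecewise-linear \emph{tent} weight $w(y)=y-(t-\delta/2)$ for $y\le t$ and $w(y)=-(t+\delta/2-y)$ for $y\ge t$, so $|w(y)|\le\delta/2$ throughout. The Cauchy--Schwarz inequality then yields
\begin{align*}
\frac{2}{\delta}|I_t|\le\int_{t-\delta/2}^{t+\delta/2}|f(y)||f'(y)|\,dy\le\left(\int_{t-\delta/2}^{t+\delta/2}|f|^2\right)^{1/2}\left(\int_{t-\delta/2}^{t+\delta/2}|f'|^2\right)^{1/2}.
\end{align*}

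To assemble the final inequality I multiply the resulting pointwise bound by $N_\delta^{-1}(t)$ and sum over $t\in A$. For the main term, swapping sum and integral produces
\begin{align*}
\sum_{t\in A}N_\delta^{-1}(t)\int_{t-\delta/2}^{t+\delta/2}|f(x)|^2\,dx=\int_{T_0}^{T+T_0}|f(x)|^2\Bigl(\sum_{t\in A,\,|t-x|<\delta/2}N_\delta^{-1}(t)\Bigr)dx,
\end{align*}
and the inner sum is at most $1$: if $|t-x|<\delta/2$ then every $t'\in A$ with $|t'-x|<\delta/2$ satisfies $|t'-t|<\delta$, so $N_\delta(t)\ge\#\{t'\in A:|t'-x|<\delta/2\}$, and each summand is bounded by the reciprocal of this common count. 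For the derivative contribution, Cauchy--Schwarz on the outer sum over $t$ followed by the same swap-of-summation argument on each of the two resulting factors delivers $(\int_{T_0}^{T+T_0}|f|^2)^{1/2}(\int_{T_0}^{T+T_0}|f'|^2)^{1/2}$. The only delicate point in the whole argument is the tent-weight estimate for $I_t$; a crude triangle-inequality bound would introduce an unwanted factor of $2$ that would prevent the clean constant in the stated inequality.
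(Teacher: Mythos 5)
Your proof is correct. The paper itself does not prove this lemma — it only cites Montgomery's \emph{Topics in Multiplicative Number Theory}, Lemma 1.4 — and your argument is a complete and careful rendition of the standard averaging-plus-Cauchy--Schwarz proof found there, including the tent-weight computation that recovers the sharp constant (a crude estimate would indeed lose a factor of $2$) and the overlap-counting step showing $\sum_{t\in A,\,|t-x|<\delta/2}N_\delta^{-1}(t)\le 1$.
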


\begin{proof}
For a proof see \cite[Lemma 1.4]{M}.
\end{proof}

\section{Explicit Formulae}

\begin{proof}[Proof of Theorem \ref{MAIN1}]
Let $T_0:=\min\left\{1,\gamma_\alpha:\gamma_\alpha>0\right\}/2$. 
Such number exists as follows from the Riemann-von Mangoldt formula \eqref{RMf}. 
Recall the definition of $\sigma\left(\mathcal{L},\alpha\right)$ in \eqref{aabscissa} and
assume that $T$ is not an ordinate of an $\alpha$-point of $\mathcal{L}(s)$.
Since there are only finitely many $\alpha$-points in the plane $\left\{s\in\mathbb{C}:\sigma\leq0,\,t\geq T_0\right\}$, by the calculus of the residues, we have that
\begin{align}\label{init}
\begin{split}
\mathop{\sum\limits_{0<\gamma_\alpha\leq T}}_{\beta_\alpha\geq0}x^{\rho_\alpha}
&=\dfrac{1}{2\pi i}\left\{\,\int\limits_{\sigma\left(\mathcal{L},\alpha\right)+\epsilon+iT_0}^{\sigma\left(\mathcal{L},\alpha\right)+\epsilon+iT}+\int\limits_{\sigma\left(\mathcal{L},\alpha\right)+\epsilon+iT}^{-2+iT}+\int\limits_{-2+iT}^{-2+iT_0}+\int\limits_{-2+iT_0}^{\sigma\left(\mathcal{L},\alpha\right)+\epsilon+iT_0}\right\}\hspace{-1pt}x^s\dfrac{\mathcal{L}'(s)}{\mathcal{L}(s)-\alpha}\mathrm{d}s\hspace*{-1pt}+\hspace{-1pt}O(1)\\
&=:\mathcal{I}_1+\mathcal{I}_2+\mathcal{I}_3+\mathcal{I}_4+O(1).
\end{split}
\end{align}

We start by estimating $\mathcal{I}_1$. 
From Lemma 1 we know that the Dirichlet series expression of $\mathcal{L}'(s)/\left(\mathcal{L}(s)-\alpha\right)$ is absolutely convergent in the half-plane $\sigma\geq \sigma\left(\mathcal{L},\alpha\right)+\epsilon$ and $\Lambda\left(n;\mathcal{L},\alpha\right)\ll_\epsilon n^{\sigma(\mathcal{L},\alpha)-1+\epsilon/2}$.
 Therefore, interchanging integration and summation yields
 \begin{align}\label{I1}
 \begin{split}
 \mathcal{I}_1=-\dfrac{1}{2\pi}\sum\limits_{n=1}^{\infty}\Lambda(n;\mathcal{L},\alpha)\left(\dfrac{x}{n}\right)^{\sigma\left(\mathcal{L},\alpha\right)+\epsilon}\int\limits_{T_0}^T\left(\dfrac{x}{n}\right)^{it}\mathrm{d}t=-\dfrac{T-T_0}{2\pi}\Lambda(x;\mathcal{L},\alpha)+E,
\end{split}
 \end{align}
 where
 \begin{align*}
 E\hspace*{-1.1pt}\ll\hspace*{-1pt}\mathop{\sum\limits_{n=1}^{\infty}}_{n\neq x}\hspace*{-1pt}|\Lambda(n;\mathcal{L},\alpha)|\hspace*{-1pt}\left(\dfrac{x}{n}\right)^{\sigma\left(\mathcal{L},\alpha\right)+\epsilon}\min\hspace*{-1pt}\left\{\hspace*{-1pt}T,\left|\log\frac{x}{n}\right|^{-1}\right\}
 \hspace*{-1pt}\ll_\epsilon \mathop{\sum\limits_{n=1}^{\infty}}_{n\neq x}\dfrac{x^{\sigma(\mathcal{L},\alpha)+\epsilon}}{n^{1+\epsilon/2}}\min\hspace*{-1pt}\left\{\hspace*{-1pt}T,\dfrac{\max\left\{x,n\right\}}{|x-n|}\hspace*{-1pt}\right\}.
 \end{align*}
 Assume that $x$ is not an integer. 
 Let also $\lfloor x\rfloor$ and $\lceil x\rceil$, denote the largest integer which is less than or equal to $x$ and the smallest integer which is greater than or equal to $x$, respectively. 
 Splitting the latter sum into sums where the indices of their summands range over $n\leq x/2$, $n\geq 2x$, $x/2<n<\lfloor x\rfloor$,  $\lceil x\rceil<n<2x$, $n=\lfloor x\rfloor$ and $n=\lceil x\rceil$, we have that
 \begin{align}\label{E}
 \begin{split}
 E&\ll_\epsilon x^{\sigma(\mathcal{L},\alpha)+\epsilon}+ x^{\sigma(\mathcal{L},\alpha)+\epsilon/2}\mathop{\sum\limits_{\frac{x}{2}< n<2x}}_{n\neq\lfloor x\rfloor,\lceil x\rceil}\dfrac{1}{|x-n|}+x^{\sigma(\mathcal{L},\alpha)-1+\epsilon/2}\min\left\{T,\dfrac{x}{ x-\lfloor x\rfloor},\dfrac{\lceil x\rceil}{\lceil x\rceil-x}\right\}\\
 &\ll_\epsilon  x^{\sigma(\mathcal{L},\alpha)+\epsilon}+x^{\sigma(\mathcal{L},\alpha)+\epsilon/2}\log x+x^{\sigma(\mathcal{L},\alpha)-1+\epsilon/2}\min\left\{T,\dfrac{x}{\| x\|}\right\}\\
 &\ll_\epsilon x^{\sigma(\mathcal{L},\alpha)+\epsilon}+x^{\sigma(\mathcal{L},\alpha)+\epsilon/2}\min\left\{\frac{T}{x},\dfrac{1}{\|x\|}\right\}.
 \end{split}
 \end{align}
 If on the other hand $x$ is a positive integer, then we do not need to consider the cases $n=\lfloor x\rfloor$ and $n=\lceil x\rceil$ seperately, in which event we just have
 \begin{align}\label{error1}
 E\ll_\epsilon x^{\sigma(\mathcal{L},\alpha)+\epsilon}.
 \end{align}
 Since $T_0\Lambda(x;\mathcal{L},\alpha)\ll_\epsilon x^{\sigma(\mathcal{L},\alpha)-1+\epsilon/2}$, it follows from relations \eqref{I1}-\eqref{error1} that
 \begin{align}\label{I1f}
 \mathcal{I}_1=-\dfrac{T}{2\pi}\Lambda(x;\mathcal{L},\alpha)+O_\epsilon\left(x^{\sigma(\mathcal{L},\alpha)+\epsilon}\left(1+\mathbbm{1}_{\mathbb{R}\setminus\mathbb{Z}}(x)\min\left\{\frac{T}{x},\dfrac{1}{\| x\|}\right\}\right)\right).
 \end{align}
 
 We continue with estimating $\mathcal{I}_2$. 
 It follows from Lemma \ref{hori} that
 \begin{align}\label{I2}
2\pi i \mathcal{I}_2-\sum\limits_{|\gamma_\alpha-T|\leq 1}\,\int\limits_{\sigma\left(\mathcal{L},\alpha\right)+\epsilon+iT}^{-2+iT}\dfrac{x^{s}}{s-\rho_\alpha}\mathrm{d}s
\ll\log T\int\limits_{-2}^{\sigma\left(\mathcal{L},\alpha\right)+\epsilon}x^{\sigma}\mathrm{d}\sigma
\ll x^{\sigma\left(\mathcal{L},\alpha\right)+\epsilon}\dfrac{\log T}{\log x}.
 \end{align}
 Moreover, by the calculus of the residues, we know that
 \begin{align*}
 \int\limits_{\sigma\left(\mathcal{L},\alpha\right)+\epsilon+iT}^{-2+iT}\dfrac{x^{s}}{s-\rho_\alpha}\mathrm{d}s=&\,-2\pi i R(\rho_\alpha)+\\
 &+\left\{
 \int\limits_{\sigma\left(\mathcal{L},\alpha\right)+\epsilon+iT}^{\sigma\left(\mathcal{L},\alpha\right)+\epsilon+i(T+2)}+
 \int\limits_{\sigma\left(\mathcal{L},\alpha\right)+\epsilon+i(T+2)}^{-2+i(T+2)}+\int\limits_{-2+i(T+2)}^{-2+iT}\right\}\dfrac{x^{s}}{s-\rho_\alpha}\mathrm{d}s,
 \end{align*}
 where $R(\rho_\alpha)$ is 0 or 1, depending on whether $\rho_\alpha$ lies or not, respectively, in the rectangle with vertices $\sigma\left(\mathcal{L},\alpha\right)+\epsilon+iT$,  $\sigma\left(\mathcal{L},\alpha\right)+\epsilon+i(T+2)$, $-2+i(T+2)$ and $-2+iT$.
 Then
 \begin{align}\label{smal}
 \begin{split}
  \int\limits_{\sigma\left(\mathcal{L},\alpha\right)+\epsilon+iT}^{-2+iT}\dfrac{x^{s}}{s-\rho_\alpha}\mathrm{d}s
 & \ll 1+\int\limits_{T}^{T+2}\dfrac{x^{\sigma\left(\mathcal{L},\alpha\right)+\epsilon}}{\sqrt{\epsilon^2+(t-\gamma_\alpha)^2}}\mathrm{dt}+\dfrac{x^{\sigma\left(\mathcal{L},\alpha\right)+\epsilon}}{\log x}+\int\limits_{T}^{T+2}\dfrac{\mathrm{dt}}{x^2\sqrt{4+(t-\gamma_\alpha)^2}}\\
  &\ll_\epsilon x^{\sigma\left(\mathcal{L},\alpha\right)+\epsilon}.
  \end{split}
 \end{align}
 Since $\sum_{|\gamma_\alpha-T|\leq 1}1\ll\log T$ by relation \eqref{RMf}, it follows from \eqref{I2} and \eqref{smal} that
 \begin{align}\label{I2f}
 \mathcal{I}_2\ll_\epsilon  x^{\sigma\left(\mathcal{L},\alpha\right)+\epsilon}\log T\left(1+\dfrac{1}{\log x}\right).
 \end{align}
 
In order to estimate $\mathcal{I}_3$, we firstly observe that Lemma \ref{lb} implies that
$$|\mathcal{L}(-2+it)|>2 |\alpha|$$
for any $t\geq T_1$, where $T_1\gg 1$ is a sufficiently large positive number such that it is not an ordinate of an $\alpha$-point of $\mathcal{L}(s)$.
Therefore, for $T\geq T_1$ we have that
\begin{align}\label{I3}
\begin{split}
2\pi x^2\mathcal{I}_3+O(1)
&=-\int\limits_{T_1}^Tx^{it}\dfrac{\mathcal{L}'}{\mathcal{L}}(-2+it)\left(1-\frac{\alpha}{\mathcal{L}(-2+it)}\right)^{-1}\mathrm{d}t\\
&=-\int\limits_{T_1}^Tx^{it}\dfrac{\mathcal{L}'}{\mathcal{L}}(-2+it)\left(1+\sum\limits_{k=1}^{\infty}\left(\dfrac{\alpha}{\mathcal{L}(-2+it)}\right)^k\right)\mathrm{d}t\\
&=:-\left(\mathcal{I}_{31}+\mathcal{I}_{32}\right).
\end{split}
\end{align}
In view of the functional equation \eqref{functequa} and relation \eqref{fS}, it follows that
\begin{align}\label{forla}
\begin{split}
\mathcal{I}_{31}
&=\int\limits_{T_1}^Tx^{it}\left(-\dfrac{\overline{\mathcal{L}}'}{\overline{\mathcal{L}}}(3+it)+\dfrac{H_\mathcal{L}'}{H_\mathcal{L}}(-2+it)\right)\mathrm{d}t\\
&=\sum\limits_{n=1}^{\infty}\dfrac{\overline{\Lambda\left(n;\mathcal{L},0\right)}}{n^{3}}\int\limits_{T_1}^T(xn)^{it}\mathrm{d}t-\int\limits_{T_1}^Tx^{it}\log\left(\lambda Q^2\left(\dfrac{t}{e}\right)^{d_\mathcal{L}}\right)\mathrm{d}t+\int\limits_{T_1}^Tx^{it}O\left(\dfrac{1}{t}\right)\mathrm{d}t\\
&=:\mathcal{I}_{3111}+\mathcal{I}_{312}+\mathcal{I}_{313}.
\end{split}
\end{align}
Observe that
\begin{align}
\mathcal{I}_{3111}\ll\min\left\{T,\dfrac{1}{\log x}\right\}\,\,\,\text{ and }\,\,\,\mathcal{I}_{313} \ll\log T.
\end{align}
To estimate $\mathcal{I}_{312}$ we integrate by parts. 
Then
\begin{align}\label{forla1}
\mathcal{I}_{312}=\left[-\dfrac{x^{it}}{\log x}\log\left(\lambda Q^2\left(\dfrac{t}{e}\right)^{d_\mathcal{L}}\right)\right]_{T_1}^T+\int\limits_{T_1}^T\dfrac{x^{it}ed_\mathcal{L}}{t\log x}\mathrm{d}t\ll\dfrac{\log T}{\log x}.
\end{align}
Lastly, Lemma \ref{lb} implies that
\begin{align}\label{I32}
\mathcal{I}_{32}\ll \int\limits_{T_1}^{T}\sum\limits_{k=1}^{\infty}\left(\frac{\alpha}{t^{3d_\mathcal{L}/2}}\right)^k\mathrm{d}t\ll\int\limits_{T_1}^{T}\dfrac{1}{t^{5/2}}\mathrm{d}t\ll1.
\end{align}
In view of relations \eqref{I3}-\eqref{I32}, we obtain that
\begin{align}\label{I3f}
\mathcal{I}_3\ll\dfrac{1}{x^2}\left(\min\left\{T,\dfrac{1}{\log x}\right\}+\log T\left(1+\dfrac{1}{\log x}\right)\right),
\end{align}
as long as $T\geq T_1$.
For the remaining $T$, that is for $T_0\leq T<T_1$, we can bound $\mathcal{I}_3$ from above trivially by $x^{-2}$ which is absorbed  by the error term appearing in \eqref{I1f} since $x>1$.

It remains to estimate $\mathcal{I}_4$ which is independent of $T$ and, therefore, can be bounded trivially from above by $x^{\sigma\left(\mathcal{L},\alpha\right)+\epsilon}$.

The theorem now follows from \eqref{init}, \eqref{I1f}, \eqref{I2f} and \eqref{I3f}. 
Recall that we assumed in the beginning that $T$ is not an ordinate of an $\alpha$-point.
Otherwise, we choose a $T_2\in[T,T+1]$ such that it is not an ordinate of an $\alpha$-point and observe that
$$\mathop{\sum\limits_{0<\gamma_\alpha\leq T}}_{\beta_\alpha\geq0}x^{\rho_\alpha}=\mathop{\sum\limits_{0<\gamma_\alpha\leq T_2}}_{\beta_\alpha\geq0}x^{\rho_\alpha}+O\left(x^{\sigma\left(\mathcal{L},\alpha\right)}\sum\limits_{T\leq\gamma_\alpha\leq T+1}1\right)
=\mathop{\sum\limits_{0<\gamma_\alpha\leq T_2}}_{\beta_\alpha\geq0}x^{\rho_\alpha}+O\left(x^{\sigma\left(\mathcal{L},\alpha\right)}\log T\right).$$
Then we estimate the latter sum as described above, while the error term is absorbed by the error term appearing in \eqref{I2f}.
\end{proof}

\begin{proof}[Proof of Theorem \ref{MAIN2}]
Let $T_0$ be as in the proof of Theorem \ref{MAIN1} and $\epsilon\in(0,1/2)$.
If $T$ is not an ordinate of an $\alpha$-point,
then by the calculus of the residues it follows that
\begin{align}\label{init1}
\begin{split}
\mathop{\sum\limits_{0<\gamma_\alpha\leq T}}_{\beta_\alpha\geq0}x^{-\rho_\alpha}
=&\dfrac{1}{2\pi i}\left\{\,\int\limits_{\sigma\left(\mathcal{L},\alpha\right)+1+iT_0}^{\sigma\left(\mathcal{L},\alpha\right)+1+iT}+\int\limits_{\sigma\left(\mathcal{L},\alpha\right)+1+iT}^{-\epsilon+iT}+\int\limits_{-\epsilon+iT}^{-\epsilon+iT_0}+\int\limits_{-\epsilon+iT_0}^{\sigma\left(\mathcal{L},\alpha\right)+1+iT_0}\right\}\hspace{-1pt}x^{-s}\dfrac{\mathcal{L}'(s)}{\mathcal{L}(s)-\alpha}\mathrm{d}s+\\
&+O(x^\epsilon)\\
=:&\,\mathcal{I}_1+\mathcal{I}_2+\mathcal{I}_3+\mathcal{I}_4+O(x^\epsilon).
\end{split}
\end{align}
The error term occurs from the possible existence of finitely many $\alpha$-points in the vertical strip $-\epsilon\leq\sigma<0$.

The absolute convergence of $\mathcal{L}'(s)/(\mathcal{L}(s)-\alpha)$ in the half-plane $\sigma\geq\sigma(\mathcal{L},\alpha)+1$ and our assumption that $x>1$ imply that
\begin{align}\label{I1i}
2\pi x^{\sigma\left(\mathcal{L},\alpha\right)+1}\mathcal{I}_1=\sum\limits_{n=1}^{\infty}\dfrac{\Lambda\left(n;\mathcal{L},\alpha\right)}{n^{\sigma\left(\mathcal{L},\alpha\right)+1}}\int\limits_{T_0}^T(xn)^{-it}\mathrm{d}t\ll\sum\limits_{n=1}^{\infty}\dfrac{|\Lambda\left(n;\mathcal{L},\alpha\right)|}{n^{\sigma\left(\mathcal{L},\alpha\right)+1}\log(xn)}\ll\dfrac{1}{\log x}.
\end{align}

Once more, $\mathcal{I}_4$ is easily seen to be bounded by $x^\epsilon$ which is abosrbed in the error term appearing in \eqref{init1}.

To estimate $\mathcal{I}_2$ we work as in \eqref{I2} and \eqref{smal}, where instead of $x^s$ we have $x^{-s}$.
Then
\begin{align}\label{I2i}
\begin{split}
\mathcal{I}_2
&\ll\sum\limits_{|\gamma_\alpha-T|\leq 1}\left(1+\int\limits_{T}^{T+2}\dfrac{x^{-\left(\sigma\left(\mathcal{L},\alpha\right)+1\right)}}{\sqrt{1+(t-\gamma_\alpha)^2}}\mathrm{dt}+\dfrac{x^\epsilon}{\log x}+\int\limits_{T}^{T+2}\dfrac{x^\epsilon}{\sqrt{\epsilon^2+(t-\gamma_\alpha)^2}}\mathrm{d}t\right)+x^\epsilon\dfrac{\log T}{\log x}\\
&\ll\sum\limits_{|\gamma_\alpha-T|\leq 1}\left(1+\dfrac{x^\epsilon}{\log x}+x^\epsilon\int\limits_{T}^{T+2}\min\left\{\frac{1}{\epsilon},\dfrac{1}{|t-\gamma_\alpha|}\right\}\mathrm{d}t\right)+x^\epsilon\dfrac{\log T}{\log x}\\
&\ll x^\epsilon\log T\left(\dfrac{1}{\log x}+\log\dfrac{1}{\epsilon}\right).
\end{split}
\end{align} 
It remains to estimate $\mathcal{I}_3$.
Lemma \ref{lb} implies that
\begin{align*}
|\mathcal{L}(-\epsilon+it)|>2|\alpha|
\end{align*}
for any $t\geq T_1$, where $T_1\gg_\epsilon1$ is a sufficienlty large positive number such that it is not an ordinate of an $\alpha$-point.
Therefore, for $T\geq T_1$ we have that
\begin{align}\label{I3i}
\begin{split}
\dfrac{2\pi}{x^\epsilon}\mathcal{I}_3+O(1)
&=-\int\limits_{T_1}^Tx^{-it}\dfrac{\mathcal{L}'}{\mathcal{L}}(-\epsilon+it)\left(1-\frac{\alpha}{\mathcal{L}(-\epsilon+it)}\right)^{-1}\mathrm{d}t\\
&=-\int\limits_{T_1}^Tx^{-it}\dfrac{\mathcal{L}'}{\mathcal{L}}(-\epsilon+it)\left(1+\sum\limits_{k=1}^{\infty}\left(\dfrac{\alpha}{\mathcal{L}(-\epsilon+it)}\right)^k\right)\mathrm{d}t\\
&=:-\left(\mathcal{I}_{31}+\mathcal{I}_{32}\right).
\end{split}
\end{align}
Working as in \eqref{forla}-\eqref{forla1}, it follows that
\begin{align*}
\mathcal{I}_{31}=\sum\limits_{n=1}^{\infty}\dfrac{\overline{\Lambda\left(n;\mathcal{L},0\right)}}{n^{1+\epsilon}}\int\limits_{T_1}^T\left(\dfrac{n}{x}\right)^{it}\mathrm{d}t+O\left(\log T\left(1+\dfrac{1}{\log x}\right)\right).
\end{align*}
By estimating the latter sum as in \eqref{I1}-\eqref{error1}, we obtain that
\begin{align}\label{I31i}
\mathcal{I}_{31}=\dfrac{T}{x^{1+\epsilon}}\overline{\Lambda\left(x;\mathcal{L},0\right)}+O_\epsilon\left(1+\dfrac{\mathbbm{1}_{\mathbb{R}\setminus\mathbb{Z}}(x)}{x^{\epsilon/2}}\min\left\{\dfrac{T}{x},\dfrac{1}{\| x\|}\right\}\right)+O\left(\log T\left(1+\dfrac{1}{\log x}\right)\right)
\end{align}

To estimate $\mathcal{I}_{32}$ we simply note that $d_{\mathcal{L}}\geq2$.
Thus, in view of Lemma \ref{lb}, we have that
\begin{align}\label{I32i}
\mathcal{I}_{32}
\ll_\epsilon\int\limits_{T_1}^T\sum\limits_{k=1}^{\infty}\left(\dfrac{\alpha}{t^{(1/2+\epsilon)d_\mathcal{L}}}\right)^k\mathrm{d}t
\ll_\epsilon\int\limits_{T_1}^T\dfrac{1}{t^{1+2\epsilon}}\ll_\epsilon 1.
\end{align}
Combining relations \eqref{I3i}-\eqref{I32i} yields that
\begin{align}\label{I3if}
\mathcal{I}_3
=-\dfrac{T\overline{\Lambda\left(x;\mathcal{L},0\right)}}{2\pi x}+O_\epsilon\hspace*{-2pt}\left(x^\epsilon\left(1+\mathbbm{1}_{\mathbb{R}\setminus\mathbb{Z}}(x)\min\hspace*{-0.7pt}\left\{\dfrac{T}{x},\dfrac{1}{\| x\|}\right\}\right)\hspace*{-1pt}\right)+O\hspace*{-2pt}\left(x^{\epsilon}\log T\left(1+\dfrac{1}{\log x}\right)\hspace*{-1pt}\right),
\end{align}
as long as $T\geq T_1$.
For the remaining $T$, that is for $T_0\leq T<T_1$, we have that
\begin{align}\label{remain}
\mathcal{I}_3=\pm\dfrac{T\overline{\Lambda\left(x;\mathcal{L},0\right)}}{2\pi x}+\int\limits_{-\epsilon+iT}^{-\epsilon+iT_0}x^{-s}\dfrac{\mathcal{L}'(s)}{\mathcal{L}(s)-\alpha}\mathrm{d}s=-\dfrac{T\overline{\Lambda\left(x;\mathcal{L},0\right)}}{2\pi x}+O_\epsilon\left(x^\epsilon\right).
\end{align}

The theorem now follows from relations \eqref{init1}-\eqref{I2i}, \eqref{I3if} and \eqref{remain} and by arguing similarly as in the end of the proof of the Theorem \ref{MAIN1} for the case of $T$ being an ordinate of an $\alpha$-point.
\end{proof}

\begin{proof}[Proof of Theorem \ref{MAIN3}]
We set $\epsilon=1/\log 3x$ and we work exactly as in the proof of Theorem \ref{MAIN2} to estimate $\mathcal{I}_1$, $\mathcal{I}_2$ and $\mathcal{I}_4$.
In particular, we have that $1\leq x^{\epsilon}\leq e$ and
\begin{align}\label{firste}
\mathcal{I}_1\ll\dfrac{1}{x^{\sigma\left(\mathcal{L},\alpha\right)+1}\log x},\,\,\,\mathcal{I}_2\ll\log T\left(\dfrac{1}{\log x}+\log\log (3x)\right)\,\,\,\text{ and }\,\,\,\mathcal{I}_4\ll1.
\end{align}

The estimation of $\mathcal{I}_3$ is more laborious.
Since $d_\mathcal{L}=1$, $\mathcal{L}(s)$ is either the Riemann zeta-function $\zeta(s)$ or a shift $L\left(s+i\theta;\chi\right)$ of a Dirichlet $L$-function for some $\theta\in\mathbb{R}$ and some primitive Dirichlet character $\chi$.
Then, it follows from \cite[Theorem 3.22, Theorem 8.20, Theorem 8.22]{Ten} that
\begin{align}
|\mathcal{L}(\sigma+it)|\gg\left(\log |t|\right)^{-m},\,\,\,|t|\geq t_0>1,
\end{align}
uniformly in $1\leq\sigma\leq2$, where $m=1$ if $\mathcal{L}(s)=\zeta(s)$ and $m=7$, otherwise.
In view of relation \eqref{order} we deduce that
\begin{align}\label{gorder}
|\mathcal{L}(\sigma+it)|\gg\dfrac{|t|^{1/2-\sigma}}{\left(\log |t|\right)^{m}},\,\,\,|t|\geq t_0>1,
\end{align}
uniformly in $-1\leq\sigma\leq0$.
Hence, there is a sufficiently large positive number $T_1$, only this time independent of $\epsilon$, such that it is not an ordinate of an $\alpha$-point and
$$|\mathcal{L}(\sigma+it)|>2|\alpha|$$
for any $t\geq T_1$ and  $-1\leq\sigma\leq0$.
Therefore, for $T\geq T_1$ we have that
\begin{align}\label{I3ii}
\begin{split}
\frac{2\pi}{ x^{\epsilon}}\mathcal{I}_3+O(1)
&=-\int\limits_{T_1}^Tx^{-it}\dfrac{\mathcal{L}'}{\mathcal{L}}(-\epsilon+it)\left(1-\frac{\alpha}{\mathcal{L}(-\epsilon+it)}\right)^{-1}\mathrm{d}t\\
&=-\int\limits_{T_1}^Tx^{-it}\dfrac{\mathcal{L}'}{\mathcal{L}}(-\epsilon+it)\left(1+\dfrac{\alpha}{\mathcal{L}(-\epsilon+it)}+\sum\limits_{k=2}^{\infty}\left(\dfrac{\alpha}{\mathcal{L}(-\epsilon+it)}\right)^k\right)\mathrm{d}t\\
&=:-\left(\mathcal{I}_{31}+\mathcal{I}_{32}+\mathcal{I}_{33}\right).
\end{split}
\end{align}

Once more, we have that
\begin{align}\label{I31ii}
\mathcal{I}_{31}=\sum\limits_{n=1}^{\infty}\dfrac{\overline{\Lambda\left(n;\mathcal{L},0\right)}}{n^{1+\epsilon}}\int\limits_{T_1}^T\left(\dfrac{n}{x}\right)^{it}\mathrm{d}t+O\left(\log T\left(1+\dfrac{1}{\log x}\right)\right).
\end{align}
In this case, however, we know that $|\Lambda\left(n;\mathcal{L},0\right)|=\Lambda(n)$.
Thus,
\begin{align}\label{I31part}
\begin{split}
\sum\limits_{n=1}^{\infty}\dfrac{\overline{\Lambda\left(n;\mathcal{L},0\right)}}{n^{1+\epsilon}}\int\limits_{T_1}^T\left(\dfrac{n}{x}\right)^{it}\mathrm{d}t\
&=\frac{T-T_1}{x^{1+\epsilon}}\overline{\Lambda\left(x;\mathcal{L},0\right)}+\mathop{\sum\limits_{n=1}^{\infty}}_{n\neq x}\dfrac{\Lambda(n)}{n^{1+\epsilon}}\min\left\{T,\left|\log\dfrac{x}{n}\right|^{-1}\right\}\\
&=\hspace*{-0.55pt}\dfrac{T\overline{\Lambda\left(x;\mathcal{L},0\right)}}{x^{1+\epsilon}}
\hspace*{-0.55pt}+\hspace*{-0.55pt}O\left(\log(2 x)\log\log (3x)+\log x\min\hspace*{-0.5pt}\left\{\dfrac{T}{x},\dfrac{1}{\langle x\rangle}\right\}\right),
\end{split}
\end{align}
as follows by \cite[Lemma 2]{Gon}.

It is simple to estimate $\mathcal{I}_{33}$ since by relation \eqref{gorder} it follows that
\begin{align}\label{I33i}
\mathcal{I}_{33}\ll\int\limits_{T_1}^{T}\sum
\limits_{k=2}^{\infty}\left(\dfrac{\alpha(\log t)^m}{t^{1/2+\epsilon}}\right)^k\mathrm{d}t\ll(\log T)^{2m}\int\limits_{T_1}^T\dfrac{1}{t^{1+2\epsilon}}\mathrm{d}t\ll(\log T)^{2m}\log(3x).
\end{align}

To estimate $\mathcal{I}_{32}$ we start with integration by parts, that is,
\begin{align}\label{I32e}
\dfrac{x^{\epsilon}}{\alpha i}\mathcal{I}_{32}=-\int\limits_{-\epsilon+iT_1}^{-\epsilon+iT}x^{-s}\dfrac{\mathcal{L}'}{\mathcal{L}^2}(s)\mathrm{d}s
=\left[\dfrac{x^{-s}}{\mathcal{L}(s)}\right]_{-\epsilon+iT_1}^{-\epsilon+iT}+\log x\int\limits_{-\epsilon+iT_1}^{-\epsilon+iT}\dfrac{x^{-s}}{\mathcal{L}(s)}\mathrm{d}s.
\end{align}
The first term on the right-hand side is bounded above by $T_1^{-1/2}(\log T_1)^m\ll1$ by relation \eqref{gorder}.
For the second term we split the path of integration in a dyadic manner:
\begin{align}\label{split}
\int\limits_{-\epsilon+iT_1}^{-\epsilon+iT}\dfrac{x^{-s}}{\mathcal{L}(s)}\mathrm{d}s=O(1)+\sum\limits_{k=1}^{\left\lfloor\log (T/T_0)/\log 2\right\rfloor}\int\limits_{-\epsilon+iT/2^{k}}^{-\epsilon+iT/2^{k-1}}\dfrac{x^{-s}}{\mathcal{L}(s)}\mathrm{d}s=:O(1)+\sum\limits_{k=1}^{\left\lfloor\log (T/T_1)/\log 2\right\rfloor}I_k.
\end{align}
 Observe that $2^k\leq T$ for all $k=1,\dots,\left\lfloor\log (T/T_1)/\log 2\right\rfloor$.
 To estimate $I_{k}$ we use the functional equation  \eqref{functequa} of $\mathcal{L}(s)$:
 \begin{align*}
I_{k}
=\int\limits_{-\epsilon+iT/2^k}^{-\epsilon+iT/2^{k-1}}\dfrac{x^{-s}}{H_\mathcal{L}(s)\overline{\mathcal{L}(1-\overline{s})}}\mathrm{d}s
 =ix^{\epsilon} \int\limits_{T/2^k}^{T/2^{k-1}}\dfrac{x^{-it}}{H_{\mathcal{L}}(-\epsilon+it)\overline{\mathcal{L}(1+\epsilon+it)}}\mathrm{d}t.
 \end{align*}
 Since $\mathcal{L}(s)$ is absolutely convergent in the half-plane $\sigma>1$ and $f(n)$ is a completely multiplicative arithmetical function, we have that 
 \begin{align*}
 \dfrac{1}{\mathcal{L}(s)}=\sum\limits_{n=1}^{\infty}\dfrac{\mu(n)f(n)}{n^s},\,\,\,\sigma>1,
 \end{align*}
 where $\mu(n)$ is the M\"obius function and the series converges absolutely in the half-plane $\sigma>1$ (see for example \cite[Section 11.4, Example 3]{apo}.
 In addition to relation \eqref{Stril}, it follows that
 \begin{align*}
I_{k}
=&\,\dfrac{ix^{\epsilon}e^{i\pi(1-\mu)/4}}{\left(2\pi\lambda Q^2\right)^{1/2+\epsilon}}\sum\limits_{n=1}^{\infty}\dfrac{\mu(n)\overline{f(n)}}{n^{1+\epsilon}}\times\\
&\times\int\limits_{T/2^k}^{T/2^{k-1}}\exp\left(id_\mathcal{L}t\log\left(\left(\dfrac{n\lambda Q^2}{x}\right)^{1/d_\mathcal{L}}\dfrac{t}{e}\right)\right)\left(\dfrac{t}{2\pi}\right)^{-\epsilon-1/2}\dfrac{1}{\omega+O\left(\frac{1}{t}\right)}\mathrm{d}t.
\end{align*}
Observe that the implicit constant in the latter relation is independent of $\epsilon$ since $0<\epsilon<1/\log3$ and we applied relation \eqref{Stril} in the strip $-1/\log 3\leq\sigma\leq0$.
If we take now $T_1\gg1$ to be a sufficiently large positive number then
$$\left(\omega+O\left(\dfrac{1}{t}\right)\right)^{-1}=\dfrac{1}{\omega}+O\left(\dfrac{1}{t}\right)$$
for any $t\geq T_1$. 
Then integrating by substitution will yield
\begin{align}\label{Ik2}
 I_{k}
 \ll &\sum\limits_{n=1}^{\infty}\dfrac{1}{n^{1+\epsilon}}\left|\,\int\limits_{d_\mathcal{L}T/2^k}^{d_\mathcal{L}T/2^{k-1}}\exp\left(it\log\left(\left(\dfrac{n\lambda Q^2}{x}\right)^{1/d_\mathcal{L}}\dfrac{t}{d_\mathcal{L}e}\right)\right)\left(\dfrac{t}{2\pi}\right)^{-\epsilon-1/2}\mathrm{d}t\right|+\\
 &+\sum\limits_{n=1}^{\infty}\dfrac{1}{n^{1+\epsilon}}\int\limits_{T/2^k}^{T/2^{k-1}}\dfrac{1}{t^{\epsilon+3/2}}\mathrm{d}t,
 \end{align}
 since $\left|\mu(n){f(n)}\right|=1$ for all $n\in\mathbb{N}$.
 From the last term of the relation above we get 
 \begin{align}\label{Ik2p}
 \sum\limits_{n=1}^{\infty}\dfrac{1}{n^{1+\epsilon}}\int\limits_{T/2^k}^{T/2^{k-1}}\dfrac{1}{t^{\epsilon+3/2}}\mathrm{d}t
\ll \left(\dfrac{2^k}{T}\right)^{1/2}\log\log(3x).
 \end{align}
 For the first term of \eqref{Ik2}, which we denote by $J_k$, we employ Lemma \ref{Gonek}.
 To this end, we consider first $T_1\gg1$ sufficiently large, with the implicit constant being independent of $\epsilon$ as it ranges over the interval $(0,1/\log3)$.
 Then, following our notation from Lemma \ref{Gonek}, we obtain that
 \begin{align*}
 J_k=&\mathop{\sum\limits_{n=1}^{\infty}}_{\frac{d_\mathcal{L}T}{2^{k}}<d_\mathcal{L}\left(\frac{x}{\lambda Q^2n}\right)^{1/d_\mathcal{L}}\leq \frac{d_\mathcal{L}T}{2^{k-1}}}\dfrac{1}{n^{1+\epsilon}}M\left(d_\mathcal{L}\left(\dfrac{x}{\lambda Q^2n}\right)^{1/d_\mathcal{L}},-\epsilon,\dfrac{d_\mathcal{L}T}{2^{k}},\dfrac{d_\mathcal{L}T}{2^{k-1}}\right)+\\
 &+\sum\limits_{n=1}^{\infty}\dfrac{1}{n^{1+\epsilon}}E\left(d_\mathcal{L}\left(\dfrac{x}{\lambda Q^2n}\right)^{1/d_\mathcal{L}},-\epsilon,\dfrac{d_\mathcal{L}T}{2^{k}},\dfrac{d_\mathcal{L}T}{2^{k-1}}\right)\\
 &=:M_k+E_k.
 \end{align*}
 We estimate firstly $M_k$:
 \begin{align*}
M_k\ll\mathop{\sum\limits_{n=1}^{\infty}}_{\frac{d_\mathcal{L}T}{2^{k}}<d_\mathcal{L}\left(\frac{x}{\lambda Q^2n}\right)^{1/d_\mathcal{L}}\leq \frac{d_\mathcal{L}T}{2^{k-1}}}\dfrac{1}{n^{1+\epsilon}}\left(\dfrac{x}{\lambda Q^2n}\right)^{-\epsilon/d_\mathcal{L}}
\ll\sum\limits_{\left(\frac{2^{k-1}}{T}\right)^{d_\mathcal{L}}\frac{x}{\lambda Q^2}\leq n<\left(\frac{2^{k}}{T}\right)^{d_\mathcal{L}}\frac{x}{\lambda Q^2}}\dfrac{1}{n}
\ll 1.
 \end{align*}
We estimate now $E_k$:
 \begin{align*}
 E_k&\ll\sum\limits_{n=1}^{\infty}\dfrac{1}{n^{1+\epsilon}}\left[\left(\dfrac{d_\mathcal{L}T}{2^k}\right)^{-\epsilon-1/2}+\dfrac{\left(\frac{d_\mathcal{L}T}{2^k}\right)^{-\epsilon+1/2}}{\left|\frac{d_\mathcal{L}T}{2^k}-d_\mathcal{L}\left(\frac{x}{\lambda Q^2n}\right)^{1/d_\mathcal{L}}\right|+\left(\frac{d_\mathcal{L}T}{2^k}\right)^{1/2}}+\right.\\&\hspace*{7cm}\left.+\dfrac{\left(\frac{d_\mathcal{L}T}{2^{k-1}}\right)^{-\epsilon+1/2}}{\left|\frac{d_\mathcal{L}T}{2^{k-1}}-d_\mathcal{L}\left(\frac{x}{\lambda Q^2n}\right)^{1/d_\mathcal{L}}\right|+\left(\frac{d_\mathcal{L}T}{2^{k-1}}\right)^{1/2}}\right]\\
 &\ll\left(\dfrac{2^k}{T}\right)^\epsilon\sum\limits_{n=1}^{\infty}\dfrac{1}{n^{1+\epsilon}}\left[\left(\dfrac{2^k}{T}\right)^{1/2}+1+2^{-\epsilon}\right]\\
 &\ll\left(\dfrac{2^k}{T}\right)^{\epsilon}\log\log(3x).
 \end{align*}
 Summing up we will have that
 \begin{align*}
 \sum\limits_{k=1}^{\left\lfloor\log (T/T_1)/\log 2\right\rfloor}J_k
 &\ll\sum\limits_{k=1}^{\left\lfloor\log (T/T_1)/\log 2\right\rfloor}\left(1+\left(\dfrac{2^k}{T}\right)^{\epsilon}\log\log(3x)\right)\\
 &\ll\log T+\dfrac{2^{\epsilon\log (T/T_1)/\log 2}}{(2^\epsilon-1)T^\epsilon}\log\log(3x)\\
 &\ll\log T+\log(3x)\log\log(3x).
 \end{align*}
From this and relations \eqref{Ik2} and \eqref{Ik2p} it follows that
\begin{align*}
 \sum\limits_{k=1}^{\left\lfloor\log (T/T_1)/\log 2\right\rfloor}I_{k}
 \ll\,& \sum\limits_{k=1}^{\left\lfloor\log (T/T_1)/\log 2\right\rfloor}\left(\dfrac{2^{k}}{T}\right)^{1/2}\log\log(3x)+\log T+\log(3x)\log\log(3x)\\
 \ll\,&\log T+\log(2x)\log\log(3x).
\end{align*}
Then relations \eqref{I32e} and \eqref{split} imply that
\begin{align*}
\mathcal{I}_{32}\ll1+\log x\left(\log T+\log(2x)\log\log(3x)\right)
\end{align*}
and, thus, from \eqref{I31ii}-\eqref{I33i} we have that
\begin{align}\label{laste}
\begin{split}
\mathcal{I}_3=&\,-\dfrac{T\overline{\Lambda\left(x;\mathcal{L},0\right)}}{2\pi x}
+O\left(\log(2 x)\log\log (3x)+\log x\min\left\{\dfrac{T}{x},\dfrac{1}{\langle x\rangle}\right\}\right)+\\
&+O\left((\log T)^{2m}\log(3x)+\dfrac{\log T}{\log x}+\log x\left(\log T+\log(2x)\log\log(3x)\right)\right),
\end{split}
\end{align}
as long as $T\geq T_1$.

The theorem now follows by combining \eqref{firste} and \eqref{laste} and by arguing similarly as in the end of the proof of the Theorem \ref{MAIN1} for the case of $T$ being an ordinate of an $\alpha$-point.
\end{proof}

\section{Littlewood's theorem and Applications to Universality}

\begin{proof}[Proof of Theorem \ref{Main1}]
Let $n_0>1$ be the first positive integer $n$ such that $f(n)\neq0$.
We know that such a number exists, because $\mathcal{L}(s)$ differs from the only constant function in $\mathcal{S}$.
This also implies that $d_{\mathcal{L}}\geq1$.
 In the sequel we make use of the notation $a_n:=f(n)$, for all $n\in\mathbb{N}$.

Now we define the function 
$$g(s):=\left\{
\renewcommand{\arraystretch}{1.5}
\begin{array}{lllll}
\dfrac{|a_{n_0}|n_0^s}{a_{n_0}}(\mathcal{L}(s)-1),&\text{ if }\alpha=1,\\
\mathcal{L}(s)-\alpha,&\text{ if }\Re \alpha<1,\\
\alpha-\mathcal{L}(s),&\text{ if }\Re \alpha>1,\\
-i\left(\mathcal{L}(s)-\alpha\right),&\text{ if }\Re \alpha=1\text{ and }\Im \alpha>0,\\
i\left(\mathcal{L}(s)-\alpha\right),&\text{ if }\Re \alpha=1\text{ and }\Im \alpha<0,
\end{array}
\right.$$
for every $s\in\mathbb{C}\setminus\lbrace1\rbrace$ .
Then there exists $\sigma_0=\sigma_0(\alpha)>1$ such that
\begin{align}\label{gasy}
g(s)\asymp\Re(g(s))\asymp1,
\end{align}
uniformly in the half-plane $\sigma\geq\sigma_0-1$. 
Indeed, the absolute convergence of $\mathcal{L}(s)$ in the half-plane $\sigma>1$ implies that 
$$\Re(g(s))\leq|g(s)|\ll1,$$
uniformly in any half-plane $\sigma\geq\sigma_1>1$.
On the other hand, for $\sigma>1$,
$$\Re(g(s))=\left\{
\renewcommand{\arraystretch}{1.5}
\begin{array}{lllll}
|a_{n_0}|+\sum\limits_{n>n_0}\left(n_0/n\right)^{\sigma}E(n,t),&\alpha=1,\\
1-\Re \alpha+\sum\limits_{n\geq n_0}n^{-\sigma}F(n,t),&\Re \alpha<1,\\
\Re \alpha-1-\sum\limits_{n\geq n_0}n^{-\sigma}F(n,t),&\Re \alpha>1,\\
\Im \alpha+\sum\limits_{n\geq n_0}n^{-\sigma}G(n,t),&\Re \alpha=1\text{ and }\Im \alpha>0,\\
-\Im \alpha-\sum\limits_{n\geq n_0}n^{-\sigma}G(n,t),&\Re \alpha=1\text{ and }\Im \alpha<0,
\end{array}
\right.$$
where 
\begin{align*}
\renewcommand{\arraystretch}{1.5}
\begin{array}{lll}
E(n,t):=|a_{n_0}|^{-1}\left[\Re \left(\overline{a_{n_0}}a_n\right)\cos\left(t\log{{\dfrac{n_0}{n}}}\right)-\Im \left(\overline{a_{n_0}}a_n\right)\sin\left(t\log\dfrac{n_0}{n}\right)\right],\\
F(n,t):=\Re a_n\cos\left(t\log n\right)+\Im a_n\sin\left(t\log n\right),\\
G(n,t):=\Im a_n\cos\left(t\log n\right)-\Re a_n\sin\left(t\log n\right).
\end{array}
\end{align*}
Since $E(n,t),F(n,t),G(n,t)\ll_\epsilon n^{\epsilon}$ for all $n\in\mathbb{N}$ and $t\in\mathbb{R}$, there is a sufficiently large $\sigma_0>0$ such that
$$|g(s)|\geq\Re(g(z))\gg1,$$
uniformly in $\sigma\geq\sigma_0-1$.
Moreover, by arguing similarly, we can take $\sigma_0>0$ large enough such that also 
\begin{align}\label{order1}
|\mathcal{L}(s)|\gg1,
\end{align}
uniformly in $\sigma\geq\sigma_0-1$.

Lastly, we  define for every $s\in\mathbb{C}\setminus\lbrace1\rbrace$ the function $h(s):=\log g(s),$ where the logarithm takes its principal value for $\sigma\geq\sigma_0-1$, and for other points $s$ we define $h(s)$ to be the value obtained from $h(\sigma_0)$ by continuous variation along the line segments $[\sigma_0,\sigma_0+it]$ and $[\sigma_0+it,\sigma+it]$, provided that the path does not cross a zero or a pole of $g(s)$; if it does, then we take $h(s)=h(s+0)$. 
From the latter definition and \eqref{gasy} it follows that
\begin{align}\label{or}
h(s)\ll1,
\end{align}
uniformly in $\sigma\geq\sigma_0-1$.

For the second part of the proof we employ a technique used by Titchmarsh (see \cite[Theorem 9.12]{Titch2}).
It suffices to show that for every sufficiently large positive number $T$, if there is no $\alpha$-point of $\mathcal{L}(s)$ such that $|\gamma_\alpha-T|\leq\delta<1/2$, then $\delta\ll\left(\log\log\log T\right)^{-1}$.
To this end, let $T$ be a large positive number and assume that $\mathcal{L}(s)$ has no $\alpha$-point such that $|\gamma_\alpha-T|\leq\delta<1/2$. 
Then, the function $h(s)$, as it was defined above is regular in the rectangle $$\mathcal{R}:=\left\{s\in\mathbb{C}:-(\sigma_0+1)\leq\sigma\leq\sigma_0+1,\,\,\,\,\,|t-T|\leq\delta\right\}.$$
We set 
\begin{align}\label{N}
N:=\left\lfloor\dfrac{8\sigma_0}{\delta}\right\rfloor+1,
\end{align}
and we consider the circles
$$\mathcal{K}_{m,n}:=\left\{s\in\mathbb{C}:\left|s-\left(\sigma_0-\frac{n\delta}{4}+iT\right)\right|=\dfrac{m\delta}{4}\right\};$$
we define the maximum value of $h$ on them by
$$\mathbf{M}_{m,n}:=\max\limits_{s\in\mathcal{K}_{m,n}}|h(s)|$$
for every $n=0,\dots,N$ and $m=1,2,3,4$.
Observe that all circles $\mathcal{K}_{m,n}$ lie inside the strip $-(\sigma_0+1)\leq\sigma\leq\sigma_0+1$ and, in particular, in $\mathcal{R}$. 
Now the proof follows by arguing similarly as in \cite[Theorem 9.12]{Titch2}.
\end{proof}

\begin{proof}[Proof of Corollary \ref{Main2}]
Let $(\rho_{\alpha,n})_{n\in\mathbb{N}}$ be the sequence of $\alpha$-points of $\mathcal{L}(s)$.
Then, for every $k\geq k':=\lfloor\exp(3)/b\rfloor$, Theorem \ref{Main1} yields the existence of a positive integer $n_k$ such that $$\gamma_{\alpha,n_k}=bk+O\left(\dfrac{1}{\log\log\log (bk)}\right)=bk+o(1).$$
We also set $\gamma_{\alpha,n_k}=\gamma_{\alpha,n_{k'}}$ for every positive integer $k<k'$.
Now, if $a\notin b^{-1}\mathbb{Q}$ is a real number and $m$ a positive integer, then we know that the sequence $\left(a b k^m\right)_{k\in\mathbb{N}}$, is uniformly distributed mod 1  (see for example \cite[Chapter 1, Theorem 3.2]{K-N}) and $$\lim\limits_{k\to\infty}\left(a bk^m-a\gamma_{\alpha,n_{k^m}}\right)=0.$$
Hence, it follows from \cite[Chapter 1, Theorem 1.2]{K-N} that the sequence $\left(a\gamma_{\alpha,n_{k^m}}\right)_{k\in\mathbb{N}}$ is uniformly distributed mod 1.
\end{proof}

\begin{proof}[Proof of Theorem \ref{universality}]
Let $\alpha\in\mathbb{C}$ and $m=1$. 
 According to Corollary \ref{Main2}, we can find a subsequence of $\alpha$-points of $\mathcal{L}(s)$ such that
\begin{align}\label{In}
|\gamma_{\alpha, n_k}-2\pi k|<\dfrac{\pi}{2},
\end{align}
for every $k\geq k_1$, where $k_1\gg1$ is a sufficiently large and fixed positive integer. 
In addition, the sequence $\left(a\gamma_{\alpha, n_k}\right)_{k\in\mathbb{N}}$, is uniformly distributed modulo one or, equivalently from Weyl's criterion \cite[Chapter 1, Theorem 2.1]{K-N},
\begin{align}\label{W1}
\lim\limits_{N\to\infty}\dfrac{1}{N}\sum\limits_{k=1}^N\e\left(a\gamma_{\alpha, n_k}\right)=0,
\end{align}
for every $a\notin(2\pi)^{-1}\mathbb{Q}$.

Let $\mathbb{P}$ denote the set of prime numbers. Since the numbers
$$1\,\,\,\text{ and }\,\,\,\dfrac{\log p}{2\pi},\,\,\,p\in\mathbb{P},$$
are linearly independent over $\mathbb{Q}$, it follows from\cite[Chapter 1, Theorem 6.2]{K-N} and relation \eqref{W1}  that the sequence
\begin{align}\label{W2}
\left(\gamma_{\alpha, n_k}\dfrac{\log p}{2\pi}\right)_{p\in M},\,\,\,k\in\mathbb{N},
\end{align}
is uniformly distributed modulo one in $\mathbb{R}^{\sharp M}$, for any finite set $M\subseteq\mathbb{P}$. 
If we define now the truncated Euler products of $\zeta(s)$,
$$\zeta_M\left(s,\left(\theta_p\right)_{p\in M}\right):=\mathlarger\prod\limits_{p\in M}\left(1-\dfrac{\e\left(\theta_p\right)}{p^s}\right)^{-1}$$
for every $\left(s,\left(\theta_p\right)_{p\in M}\right)\in\left\{s\in\mathbb{C}:{\sigma>0}\right\}\times\mathbb{R}^{\sharp M}$ and finite $M\subseteq\mathbb{P}$, then \eqref{W2} and \cite[Lemma 4]{P} yield the existence of positive numbers $v$ and $d$ such that 
\begin{align}\label{App1}
\liminf\limits_{N\to\infty}\dfrac{1}{N}\sharp\left\{1\leq k\leq N:\max\limits_{s\in K}|\zeta_{\left\{p:p\leq y\right\}}(s+i\gamma_{\alpha, n_k},\underline{0})-f(s)|<\varepsilon\right\}>d,
\end{align}
for every $y\geq v$.

Let  $\delta:=\pi$, $T_0:=\gamma_{\alpha, n_{k_1}}-\pi/2$, $T:=\gamma_{\alpha, n_N}-\gamma_{\alpha, n_{k_1}}+\pi$, $A:=\lbrace \gamma_{\alpha, n_{k_1}},\dots,\gamma_{\alpha, n_N}\rbrace$ 
  and 
 \begin{align*}
 f(t):=\zeta(s+it)-\zeta_{\lbrace p:p\leq y\rbrace}(s+it,\underline{0}).
 \end{align*}
Then inequality \eqref{In} and Lemma \ref{gall} yield
 \begin{align}\label{G}
 \begin{split}
\sum\limits_{k=k_1}^N\hspace{-1pt}|\zeta(s+i\gamma_{\alpha, n_k})\hspace{-1pt}-\hspace{-1pt}\zeta_{\lbrace p:p\leq y\rbrace}(s+i\gamma_{\alpha, n_k},\underline{0})|^2\leq&\,2\int\limits_{0}^{2\pi(N+1)}|f(x)|^2\mathrm{d}x+\\
&+\hspace{-1pt}\left(\int\limits_{0}^{2\pi(N+1)}|f(x)|^2\mathrm{d}x\hspace{-1pt}\int\limits_{0}^{2\pi(N+1)}\left|f'(x)\right|^2\mathrm{d}x\right)^{1/2}.
\end{split}
\end{align}
Bohr \cite[Hilfssatz 2]{Bohr} proved that
\begin{align}\label{Bo}
\lim\limits_{y\to\infty}\limsup\limits_{T\to\infty}\dfrac{1}{T}\int\limits_{0}^{T}|\zeta(s+i\tau)-\zeta_{\left\{p:p\leq y\right\}}(s+i\tau,\underline{0})|^2\mathrm{d}\tau=0,
\end{align}
uniformly on compact subsets of $\mathcal{D}$. It is clear that an application of Cauchy's integral formula would yield \eqref{Bo} also in the case of $\zeta'-\zeta'_{\left\{p:p\leq y\right\}}$. 
Therefore, in view of  \eqref{G} and \eqref{Bo} it follows that
\begin{align}\label{A}
\lim\limits_{y\to\infty}\limsup\limits_{N\to\infty}\dfrac{1}{N}\sum\limits_{k=1}^{N}|\zeta(s+i\gamma_{\alpha, n_k})-\zeta_{\lbrace p:p\leq y\rbrace}(s+i\gamma_{\alpha, n_k},\underline{0})|^2=0,
\end{align}
uniformly on compact subsets of $\mathcal{D}$. 

If $G\subseteq\mathbb{C}$ is a bounded domain and $g$ an analytic and square integrable  function on $G$, then
$$|g(z)|\leq\left(\sqrt{\pi}d(z,\partial G)\right)^{-1}\left(\mathlarger\iint_G|g(\sigma+it)|^2\mathrm{d}\sigma\mathrm{d}t\right)^{1/2},$$
for every $z\in G$ (see for example \cite[Chapter 1, Theorem 1]{D-S}). Thus, by considering a bounded domain $G$ such that $K\subseteq G\subseteq\overline{G}\subseteq\mathcal{D}$, relation \eqref{A} and Chebyshev's inequality yield the existence of a positive number $u$ such that
\begin{align}\label{App2}
\liminf\limits_{N\to\infty}\dfrac{1}{N}\sharp\left\{1\leq k\leq N:\max\limits_{s\in K}\left|\zeta\left(s+i\gamma_{\alpha, n_k}\right)-\zeta_{\lbrace p: p\leq y\rbrace}\left(s+i\gamma_{\alpha, n_k},\underline{0}\right)\right|<\tilde{\varepsilon}
\right\}>1-\tilde{\varepsilon},
\end{align}
for every $y\geq u$, where $\tilde{\varepsilon}=\min\lbrace\varepsilon,d/2\rbrace$.

Now the theorem follows from \eqref{App1} and \eqref{App2} by taking $y=\max\lbrace u,v\rbrace$.
\end{proof}

\newpage
\small
Athanasios Sourmelidis,\\
Institute of Analysis and Number Theory, TU Graz\\
Steyrergasse 30, 8010 Graz, Austria\\
 sourmelidis@math.tugraz.at
\medskip
\medskip

Teerapat Srichan\\
Department of Mathematics, Faculty of Science,\\
Kasetsart University, Bangkok 10900, Thailand \\
fscitrp@ku.ac.th
\medskip
\medskip

 J\"orn Steuding\\
Institute for Mathematics, W\" urzburg University, \\
Emil-Fischer Str. 40, 97074 W\"urzburg, Germany\\
 athanasios.sourmelidis@mathematik.uni-wuerzburg.de\\
steuding@mathematik.uni-wuerzburg.de

\end{document}